\pgfplotsset{compat=1.18}
\newtheorem{theorem}{Theorem}
\newtheorem{lemma}[theorem]{Lemma}
\newtheorem{corollary}[theorem]{Corollary}
\newtheorem{proposition}[theorem]{Proposition}
\newtheorem{remark}[theorem]{Remark}
\newtheorem{definition}[theorem]{Definition}
\newtheorem{theoremletter}{Theorem}
\newcommand{\innerthmname}{}
\theoremstyle{definition}
\def\namedlabel#1#2{\begingroup
	#2%
	\def\@currentlabel{#2}%
	\phantomsection\label{#1}\endgroup
}
\newcommand*\owedge{\mathpalette\@owedge\relax}
\newcommand*\@owedge[1]{%
	\mathbin{%
		\ooalign{%
			$#1\m@th\bigcirc$\cr
			\hidewidth$#1\m@th\wedge$\hidewidth\cr
		}%
	}%
}
\newcommand{\R}{\mathbf{R}}
\newcommand{\N}{\mathbf{N}}
\newcommand{\Ss}{\mathbf{S}}
\newif\ifdraft
\title[Classification of singular Yamabe metrics]{Classification of fractional, singular Yamabe 
metrics on a twice-punctured sphere I}
\author[J.H. Andrade]{Jo\~{a}o Henrique Andrade}
\author[A. DelaTorre]{Azahara DelaTorre}
\author[J.M. do \'O]{Jo\~{a}o Marcos do \'O}
\author[J. Ratzkin]{Jesse Ratzkin}
\author[J. Wei]{Juncheng Wei}
\address[J.H. Andrade]{
	Institute of Mathematics and Statistics,
	University of S\~ao Paulo
	\newline\indent 
	05508-090, S\~ao Paulo-SP, Brazil
	}
\email{\href{mailto:andradejh@ime.usp.br}{andradejh@ime.usp.br}}
\address[A.DelaTorre]{
Dipartimento di Matematica Guido Castelnuovo,
Facolt\`a Scienze matematiche, fisiche e naturali,
Sapienza Universit\`a di Roma,
Piazzale Aldo Moro, 5, 00185 Roma RM.
}
\email{\href{mailto:azahara.delatorrepedraza@uniroma1.it}{azahara.delatorrepedraza@uniroma1.it}}
\address[J.M. do \'O]{Department of Mathematics,
	Federal University of Para\'{\i}ba
	\newline\indent 
	58051-900, Jo\~ao Pessoa-PB, Brazil}
\email{\href{mailto:jmbo@pq.cnpq.br}{jmbo@pq.cnpq.br}}
\address[J. Ratzkin]{Department of Mathematics,
	Universit\"{a}t W\"{u}rzburg
	\newline\indent
	97070, W\"{u}rzburg-BA, Germany}
\email{\href{mailto:jesse.ratzkin@uni-wuerzburg.de}{jesse.ratzkin@uni-wuerzburg.de}}
\address[J. Wei]{
	Department of Mathematics,
	Chinese University of Hong Kong
	\newline\indent 
	Room 220, Lady Shaw Building, Shatin, N. T., Hong Kong}
\email{\href{mailto:wei@math.cuhk.edu.hk}{wei@math.cuhk.edu.hk}}
\subjclass[2020]{35J60, 35R11, 53C21, 53A30, 35B33, 35B40}
\keywords{fractional Yamabe problem, Delaunay metrics, fractional Paneitz operator, singular solutions, conformal geometry, uniqueness and classification}
\begin{document}

\begin{abstract}
The Delaunay metrics form a family of conformally flat, constant fractional (order $s\in (0,1)$) 
$Q$-curvature metrics on a twice-punctured sphere. They are all (after a M\"obius transformation) 
rotationally symmetric and periodic, and admit several elegant variational descriptions. We prove 
that there exists $\delta > 0$ such that when $s\in (1-\delta,1)$ any complete, conformally 
flat constant $Q$-curvature metric on a 
twice-punctured sphere is a Delaunay metric. Along the way, we prove a sharp {\it a priori} 
bound for the conformal factor of these metrics, which may be of independent interest. 
 \end{abstract}

\maketitle


\section{Introduction} 
Much of geometric analysis in the last 40 years has concentrated on 
conformally invariant equations. The first step in this program for a given equation 
is to demonstrate the existence of 
a solution in a given conformal class. As a next step, one can try to 
classify the solutions, or at least prove compactness of the solution 
set in the form of {\it a priori} bounds. In the case of the conformal 
class of the round sphere, the conformal invariance of the equation 
combines with the noncompactness of the group of conformal transformations 
to complicate both the {\it a priori} bounds and the uniqueness of 
solutions. 

In the present paper, we consider the following conformally invariant equation 
\begin{equation} \label{frac_paneitz_eqn}
\mathbb{P}_s u = c_{n,s} u^{\frac{n+2s}{n-2s}} \quad {\rm in} \quad \Omega,
\end{equation} 
where $s\in(0,\frac{n}{2})$, $\mathbb{P}_s$ is the family of fractional Paneitz operators 
constructed by Graham and Zworski in \cite{graham-zworski} (see 
also \cite{chang-gonzalez}) and $c_{n,s}>0$ is the normalization 
constant defined as
$$ c_{n,s} = 2^{2s} \left ( \frac{\Gamma \left (\frac{1}{2} \left ( 
\frac{n}{2} + s \right ) \right ) }{ \Gamma \left ( \frac{1}{2} 
\left ( \frac{n}{2} -s\right ) \right ) } \right )^2.$$
The function $\Gamma$ 
is the classical Gamma function defined as $\Gamma(z) = \int_0^\infty t^{z-1} e^{-t} dt.$
Here we assume $\Omega := M\setminus \Lambda$ to be the so-called regular set, where 
$(M,g)$ is an $n$-dimensional smooth, compact Riemannian manifold without boundary, 
with $n\geq 3$ and $\Lambda\subset M$ 
is a closed subset called the singular set. We adopt the same terminology as in the study of 
constant scalar 
curvature metrics and denote a solution $u\in \mathcal{C}^\infty(\Omega)$ of \eqref{frac_paneitz_eqn} 
as a fractional, singular Yamabe solution, and the associated metric 
$\widetilde{g} = u^{\frac{4}{n-2s}} g$ as a fractional, singular Yamabe metric. 

We are mostly interested in this equation on subdomains of the sphere, 
{\it i.e.} \(\Omega:=\mathbf{S}^n\setminus \Lambda \) with 
\(\Lambda\subset \mathbf {S}^n\) a closed set. We denote the 
usual round metric as $g_\circ$. 
In this locally conformally flat setting, $\mathbb{P}_s$ has the explicit form (originally 
given by Branson in \cite{Branson_funct_det}) given by
\begin{equation} \label{paneitz_op_sph1}
\mathbb{P}_s = \frac{\Gamma \left ( \frac{n}{2} - s \right ) \Gamma \left ( B + s 
+ \frac{1}{2} \right ) } { \Gamma \left ( \frac{n}{2} + s \right ) 
\Gamma \left ( B - s + \frac{1}{2} \right ) }
\quad {\rm with} \quad B = \sqrt { -\Delta_{g_{\circ}} + \left ( \frac{n-1}{2} 
\right )^2}. 
\end{equation} 
Later, we will see a similar expansion for $\mathbb{P}_s$ in the cylindrical 
setting, which was first derived in \cite{Azahara2018}. 

Now, let $\Lambda=\{N, S\}$ with $N, S\in \mathbf{S}^n$ denote its north and south poles, respectively.
The conformally flat metric $g=U^{\frac{4}{n-2s}} g_{\circ}$ has constant
$Q_s$-curvature if the (positive) conformal factor 
$U: \Ss^n \backslash \{N,S\} \rightarrow (0,\infty)$ satisfies the PDE 
\begin{equation} \label{const_q_sph}\tag{${\mathcal{Q}}^\circ_{n,s}$}
\mathbb{P}_s U = c_{n,s} U^{\frac{n+2s}{n-2s}} \quad {\rm on} \quad \mathbf{S}^n\setminus \{N,S\}. 
\end{equation} 
In this setting, completeness of the metric $g\in [g_{\circ}]$ is equivalent to the boundary condition 
\[
\liminf_{p \rightarrow \{N,S\}} U(p) = \infty.
\]
Equivalently, we say that $U\in\mathcal{C}^\infty(\mathbf{S}^n\setminus\{N,S\})$ is a singular solution to \eqref{const_q_sph}.

Let $g_{\rm flat}$ be the 
standard Euclidean metric on $\R^n$.
We can use conformal invariance to transfer this problem 
to (a subdomain of) Euclidean space. Let $\Pi:\Ss^n \backslash \{ N\} 
\rightarrow \R^n$ be the standard stereographic projection and let 
$u_{\circ} : \R^n \rightarrow (0,\infty)$ be the standard  spherical solution 
({\it i.e.} Aubin--Talenti-type bubble or Chen--Li--Ou bubble \cite{MR2131045}) given by
\begin{equation} \label{sph_soln1}
 \quad u_{\circ}(x) 
= \hat{c}_{n,s} \left ( \frac{1+|x|^2}{2} \right )^{\frac{2s-n}{2}} 
\end{equation} 
with
\[
\hat{c}_{n,s} = 2^{\frac{2s-n}{2}} \left ( \frac{\Gamma \left ( \frac{1}{2} 
\left ( \frac{n}{2} + s\right ) \right ) }{\Gamma \left ( \frac{1}{2}
\left ( \frac{n}{2} - s \right ) \right ) }\right )^{\frac{2s-n}{2s}} 
\left ( \frac{\Gamma \left ( \frac{n}{2} - s\right )} {\Gamma\left ( \frac{n}{2} + s
\right ) } \right )^{\frac{2s-n}{4s}}.
\]
Here we remark that $\hat{c}_{n,s}> 1$ (cf. \cite[Proposition~6.1]{Azahara2018}).

We can now write a conformally 
flat metric in both the spherical and Euclidean gauges, where the two are 
related by 
\begin{equation} \label{sph_to_euc_gauge}
g=U^{\frac{4}{n-2s}}g_{\circ} = u^{\frac{4}{n-2s}} g_{\rm flat} \quad {\rm with} \quad u=U \cdot u_{\circ}. 
\end{equation}
We observe that the spherical solution described above does not 
correspond to the unit sphere, but rather to a sphere $\mathbf{S}^n(\rho)$ 
of radius $\rho=\rho(n,s)>0$ given by
$$\rho_{n,s}=2^{-2} \left ( \frac{\Gamma \left ( \frac{1}{2} 
\left ( \frac{n}{2} + s\right ) \right ) }{\Gamma \left ( \frac{1}{2}
\left ( \frac{n}{2} - s \right ) \right ) }\right )^{-2/s} 
\left ( \frac{\Gamma \left ( \frac{n}{2} - s\right )} {\Gamma\left ( \frac{n}{2} + s
\right ) } \right )^{-1/s}. $$
We choose this scaling so that the sphere fits with the form of the Delaunay solutions we derive below in \S~\ref{sec:del_existence}. 

It will be useful to transfer without further comment between these two gauges, as well as the cylindrical gauge mentioned earlier. 
In the Euclidean gauge, the positive conformal factor $u: \R^n \backslash \{ 0 \} \rightarrow (0,\infty)$ satisfies
\begin{equation} \label{const_q_euc} \tag{$\overline{\mathcal{Q}}_{n,s}$}
(-\Delta)^s u = c_{n,s} u^{\frac{n+2s}{n-2s}} \quad {\rm in} \quad \mathbf{R}^n\setminus\{0\},
\end{equation} 
where $\Delta$ is the standard Euclidean Laplacian and $(-\Delta)^s$ is the 
fractional Laplacian, which can be understood as the principal part of a singular integral. 
Here, the boundary condition translates to
\(\liminf_{x \rightarrow 0} u(x) = \infty.\)

In previous work, DelaTorre, del Pino, Gonz\'alez and Wei \cite{MR3694655}
constructed a family of solutions on the twice-punctured sphere. These 
solutions are, after an appropriate conformal dilation, all rotationally symmetric. 
They generalize the well-known Delaunay metrics in the scalar curvature setting, 
which were found by Schoen \cite{MR994021}, and we call them Delaunay metrics 
as well. A Delaunay metric is uniquely determined by its fundamental period, or 
equivalently by its necksize. 
More precisely, the fractional Delaunay family can be equivalently characterized as the 
set of even, positive, periodic 
solutions \(v_\varepsilon:\mathbf{R}\to(0,\infty)\) of a nonlocal initial value problem  (see \eqref{eq_log-cyl}) that fixes the necksize parameter 
\(\varepsilon = \min_{t \in \R} v_\varepsilon(t) .\)
For each 
necksize \(\varepsilon\in(0,1]\), the Euclidean conformal factor
\begin{equation}\label{eq:euclideandelaunay}
    u_\varepsilon(x) = |x|^{\frac{2s-n}{2}}v_\varepsilon(-\ln|x|),
\end{equation}
solves \eqref{const_q_euc} and defines $g_\varepsilon=u_\varepsilon^{\frac{4}{n-2s}}g_{\rm flat}$ 
a complete constant \(Q_s\)–curvature metric on 
\(\R^n \backslash \{ 0 \} \). We can also write out the Delaunay metric in 
the spherical gauge as 
$$g_\varepsilon = U_\varepsilon^{\frac{4}{n-2s}} g_\circ \quad {\rm and} \quad u_\varepsilon = 
U_\varepsilon \cdot u_\circ.$$

In what follows, we call $u\in \mathcal{C}^\infty(\mathbf{R}^n\setminus\{0\})$ a singular solution to \eqref{const_q_euc} if it does not extend continuously to the origin, {\it i.e.} $\lim_{x\to 0}u(x)=\infty$; otherwise, we call it  a nonsingular solution. 

Our first main result is a partial classification of singular solutions to the fractional Yamabe equation, which can be stated as
\begin{theorem} \label{main_thm} 
Let $n \in \N$ with $n \geq 3$. There exists $\delta > 0$ such that 
if $s \in (1-\delta, 1)$ and $u\in \mathcal{C}^\infty(\mathbf{R}^n\setminus\{0\})$ 
is a singular solution to \eqref{const_q_euc}, then there exists $\varepsilon \in 
(0, 1]$ and $R>0$ such that 
$$u = \bar{u}_{\varepsilon, R} \quad {\rm with} \quad \bar{u}_{\varepsilon, R} (x) = R^{\frac{2s-n}{2}} 
u_\varepsilon \left ( \frac{x}{R} \right ),$$
where $\bar{u}_\varepsilon\in \mathcal{C}^\infty(\mathbf{R}^n\setminus\{0\})$ is the Euclidean Delaunay solution of necksize $\varepsilon$ $($see \eqref{eq:Delaunay_eucledian}$)$. 
\end{theorem}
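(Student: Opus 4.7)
My approach is to argue by contradiction via a compactness/perturbation argument anchored at the local case $s=1$, where the analogous classification is due to Caffarelli--Gidas--Spruck. Suppose the conclusion fails; then there exist $s_k\uparrow 1$ and singular solutions $u_k\in\mathcal{C}^\infty(\R^n\setminus\{0\})$ of \eqref{const_q_euc} at parameter $s_k$ which are not of Delaunay form $\bar u_{\varepsilon,R}$. I would first pass to the cylindrical (Emden--Fowler) gauge via $v_k(t,\theta) = e^{-\frac{n-2s_k}{2}t}\, u_k(e^{-t}\theta)$, so that $v_k$ satisfies the nonlocal cylindrical equation whose even periodic solutions constitute the fractional Delaunay family $v_\varepsilon$ of \cite{MR3694655}. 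Using the two-parameter translation-and-dilation invariance of \eqref{const_q_euc}---precisely the freedom $(\varepsilon,R)$ in $\bar u_{\varepsilon,R}$---I normalize $v_k$ so that $\min_t v_k$ is attained at $t=0$ with a prescribed value $\varepsilon_k\in(0,1]$.

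Second, I would invoke the sharp \emph{a priori} bound announced in the abstract to obtain two-sided bounds $c\le v_k\le C$ uniform in $k$. Combined with regularity theory for $\mathbb{P}_{s_k}$ via the Caffarelli--Silvestre / Chang--Gonz\'alez-type extension of \cite{chang-gonzalez}, this yields uniform H\"older estimates, so that a subsequence $v_k$ converges in $C^0_{\mathrm{loc}}$ to a limit $v_\infty$ solving the local conformally covariant equation on the cylinder (the $s=1$ case). Since $v_\infty$ is complete and inherits the singular behavior at $t=\pm\infty$, the Caffarelli--Gidas--Spruck theorem forces $v_\infty$ to be a classical Delaunay solution of some necksize $\varepsilon_\infty\in(0,1]$.

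Third, I would show that for $s$ close enough to $1$ the only singular solutions of \eqref{const_q_euc} in a small neighborhood of the local Delaunay $v^{\mathrm{loc}}_{\varepsilon_\infty}$ are the fractional Delaunay solutions of \cite{MR3694655}. This hinges on the nondegeneracy of the local Delaunay family modulo translations and necksize, followed by an implicit function / Lyapunov--Schmidt construction in weighted function spaces on the cylinder that perturbs the local ansatz to $s<1$. Together with the uniqueness of the fractional branch in \cite{MR3694655}, this forces $v_k$ to coincide with a fractional Delaunay for large $k$, contradicting the assumption.

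The main obstacle I anticipate is twofold. First, the sharp \emph{a priori} bound itself is delicate: one must rule out concentration and bubbling as $s_k\uparrow 1$ with constants sharp enough to keep the limit in the admissible necksize range $\varepsilon\in(0,1]$, and the standard Harnack/regularity tools for $\mathbb{P}_s$ degenerate nontrivially at the local endpoint. Second, the nondegeneracy step requires a quantitative spectral analysis, uniformly for $s$ near $1$, of the linearization of $\mathbb{P}_s u - c_{n,s}u^{(n+2s)/(n-2s)}$ at $v_\varepsilon$; establishing that its kernel on the cylinder consists solely of the symmetry directions is a Fourier-mode-by-Fourier-mode calculation that ultimately fixes the admissible value of $\delta>0$ in the theorem.
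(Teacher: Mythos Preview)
Your contradiction setup, passage to the cylindrical gauge, and extraction of a limit $v_\infty$ solving the local ($s=1$) equation all parallel the paper. The genuine gap is in your third step. The convergence $v_k \to v_\infty$ you obtain is only in $\mathcal{C}^0_{\mathrm{loc}}$ on the noncompact cylinder, whereas an implicit-function or Lyapunov--Schmidt argument in weighted spaces requires $v_k$ to lie in a \emph{global} neighborhood of the fractional Delaunay branch. Nothing in your outline controls $v_k$ at large $|t|$ uniformly in $k$: $v_k$ could be close to one Delaunay on $[-L,L]$ and drift away from it on $[L,\infty)$ without violating any of your estimates. Even granting nondegeneracy of the local Delaunay and continuity of the operator family in $s$ (itself delicate, since $P_s$ passes from nonlocal to local), IFT yields uniqueness only in a ball of the weighted space, and you have not placed $v_k$ in that ball. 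The two-sided bound $c\le v_k\le C$ you assert is also unjustified: the sharp \emph{a priori} estimate in the paper is only an upper bound $\sup v\le \hat c_{n,s}$, and no uniform lower bound is available if the necksize degenerates.

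The paper circumvents this by working not with $v_k$ against a fixed limit, but with the difference $\phi_k=\overline v_k - v_k$ against a fractional Delaunay $\overline v_k$ chosen to share the same maximum at $t=0$ (the sharp upper bound, proved via the Morse-index-one property of the spherical bubble, is what guarantees such a $\overline v_k$ exists). This matching forces $\phi_k(0)=\phi_k'(0)=0$, so the normalized difference $\psi_k=\phi_k/\|\phi_k\|_{L^\infty([-L,L])}$ converges on each compact set to a solution of a second-order linear ODE with zero Cauchy data, hence to zero---contradicting $\|\psi_k\|_{L^\infty([-L,L])}=1$. The global step then replaces the local sup-norm by the weighted norm $\|\phi\|_*=\sup_t e^{-\sigma|t|}|\phi(t)|$ with $\sigma=n/2$, uses explicit decay of the kernel $K_s$ to pass $P_{s_k}\to -\partial_t^2$, and applies an ODE barrier (Lemma~\ref{lem:no-fast-left-decay}) to exclude nontrivial limit profiles when the point realizing $\|\phi_k\|_*$ escapes to infinity. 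This weighted global analysis is the crux of the proof, and your IFT sketch has no counterpart to it.
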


For a given doubleton set $\{ p, q\} \in \Ss^n$ with $p \neq q$ we define the 
marked moduli space of complete metrics with constant fractional $Q$-curvature as 
\begin{equation*}
\mathcal{M}_{\{ p,q\}} = \left \{ g=U^{\frac{4}{n-2s}}g_{\circ} \in [g_{\circ}]: 
\text{$U\in\mathcal{C}^\infty(\mathbf{S}^n\setminus \{p,q\})$ is a positive 
solution to \eqref{const_q_sph}}\right \}/\sim,
\end{equation*} 
where the quotient is taken under diffeomorphism preserving the singular set $\{N,S\}$ and is furnished with the Gromov-Hausdorff topology. 

In this fashion, we may reformulate our main theorem as
\begin{corollary}\label{main_cor}
Let $n \in \N$ with $n \geq 3$ and let $p, q \in \Ss^n$ with 
$p \neq q$. There exists 
$\delta> 0$ such that if $s\in (1-\delta,1)$ then each element of 
$\mathcal{M}_{\{p,q\}}$ is a Delaunay metric. 
\end{corollary}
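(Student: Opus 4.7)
My plan is to derive Corollary \ref{main_cor} from Theorem \ref{main_thm} by combining the conformal covariance of $\mathbb{P}_s$ with two standard changes of gauge. Fix $g\in\mathcal{M}_{\{p,q\}}$ represented by a positive $U\in\mathcal{C}^\infty(\mathbf{S}^n\setminus\{p,q\})$ solving the natural analogue of \eqref{const_q_sph} on $\mathbf{S}^n\setminus\{p,q\}$ with $\liminf_{x\to\{p,q\}}U(x)=\infty$. Since $\mathrm{Conf}(\mathbf{S}^n)$ acts transitively on unordered pairs of distinct points, I would first choose a Möbius transformation $\Phi\in\mathrm{Conf}(\mathbf{S}^n)$ sending $\{p,q\}$ to $\{N,S\}$; the standard covariance of $\mathbb{P}_s$ under conformal diffeomorphisms then produces a new positive solution of \eqref{const_q_sph}, and $\Phi$ descends to a bijection $\mathcal{M}_{\{p,q\}}\to\mathcal{M}_{\{N,S\}}$. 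Hence it suffices to treat the case $\{p,q\}=\{N,S\}$.

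Next, I would pass to the Euclidean gauge via stereographic projection $\Pi$, defining $u:=U\cdot u_\circ$ on $\mathbf{R}^n\setminus\{0\}$ as in \eqref{sph_to_euc_gauge}. Because $u_\circ$ is smooth and positive at the origin $\Pi(S)=0$, completeness of $g$ at $S$ becomes $\lim_{x\to 0}u(x)=\infty$, so $u$ is a singular solution of \eqref{const_q_euc} in the exact sense required by Theorem \ref{main_thm}. For $s\in(1-\delta,1)$ that theorem produces $\varepsilon\in(0,1]$ and $R>0$ with $u=\bar u_{\varepsilon,R}$; undoing the gauge change yields $U=\bar u_{\varepsilon,R}/u_\circ$, a rescaled spherical Delaunay conformal factor, so $g$ is a Delaunay metric. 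The parameter $R$ corresponds to the Euclidean dilation $x\mapsto x/R$, which extends to a conformal self-diffeomorphism of $\mathbf{S}^n$ fixing $\{N,S\}$ and is therefore absorbed by the equivalence relation defining $\mathcal{M}_{\{N,S\}}$; the class of $g$ is thus pinned down by the necksize $\varepsilon$ alone, recovering the classical Delaunay parameterization.

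The one point I expect to require a moment's care is verifying that completeness at the \emph{second} puncture $N$ is preserved by the argument, since in the Euclidean gauge $\bar u_{\varepsilon,R}$ decays rather than blows up at spatial infinity. This is in fact automatic from the cylindrical representation \eqref{eq:euclideandelaunay}: periodicity (or, in the limit case $\varepsilon=1$, constancy) of $v_\varepsilon$ together with the weight $|x|^{(2s-n)/2}$ and the decay of $u_\circ$ at infinity yields $U_\varepsilon(x)\sim|x|^{(n-2s)/2}v_\varepsilon(-\ln|x|)\to\infty$ as $|x|\to\infty$, so completeness at $N$ is built into every member of the Delaunay family and needs no separate estimate. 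Beyond the content of Theorem \ref{main_thm}, the corollary is therefore a formal transportation of the classification from $\mathbf{R}^n\setminus\{0\}$ back to $\mathbf{S}^n\setminus\{p,q\}$, and I do not anticipate any genuinely new analytic obstacle.
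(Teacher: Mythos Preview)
Your proposal is correct and matches the paper's treatment: the paper presents Corollary~\ref{main_cor} as an immediate reformulation of Theorem~\ref{main_thm} without a separate proof, relying on precisely the gauge changes (M\"obius transformation to antipodal punctures, then stereographic projection to $\mathbf{R}^n\setminus\{0\}$) that you spell out and that the paper records in \S\ref{sec:rot_symm} and around \eqref{sph_to_euc_gauge}. Your additional remark on completeness at $N$ is a welcome clarification but not something the paper pauses over.
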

Alternatively, we could say that $\mathcal{M}_{\{ p,q\}}$ is a half-open 
interval in the real line, where the parameter represents the minimal period 
of the corresponding Delaunay solution or (equivalently) the minimum value 
of the Delaunay solution, {\it i.e.} the necksize. 

We use the same technique as in the proof of Theorem \ref{main_thm} to 
prove that all Delaunay solutions are nondegenerate. 
This condition is 
easiest to define in the cylindrical gauge. 
For any $v:\R\times \Ss^{n-1} \rightarrow (0,\infty)$ solution to \eqref{eq_log-cyl}, we set the associated linearized operator as
\[
\mathbb{L}_v := P_s + c_{n,s} 
- \tfrac{n+2s}{n-2s}\,c_{n,s}\,v^{\frac{4s}{n-2s}}
\]
We say that $v$ is nondegenerate 
if $w \in L^2(\R \times \Ss^{n-1})$ and $\mathbb{L}_v(w) = 0$ implies 
$w \equiv 0$. 

In this setting, our second main result is
\begin{theorem} \label{nondegen_thm} 
Let $n \in \N$ with $n \geq 3$. There exists $\delta > 0$ such that 
if $s \in (1-\delta, 1)$ and $v : \R \times \Ss^{n-1} \rightarrow (0,\infty)$
satisfies \eqref{eq_log-cyl}, then $v$ is nondegenerate. 
\end{theorem}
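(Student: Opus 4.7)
By Theorem~\ref{main_thm} together with the change of variables~\eqref{eq:euclideandelaunay}, for $s\in(1-\delta,1)$ any positive solution $v:\R\times\Ss^{n-1}\to(0,\infty)$ of \eqref{eq_log-cyl} coincides (up to translation) with a Delaunay profile $v_\varepsilon$, $\varepsilon\in(0,1]$. Hence it suffices to establish nondegeneracy of $\mathbb{L}_{v_\varepsilon}$ for every $\varepsilon\in(0,1]$, uniformly in $s$ near $1$. Since $v_\varepsilon$ depends only on the cylindrical variable $t$, the operator $\mathbb{L}_{v_\varepsilon}$ commutes with the $O(n)$-action on $\Ss^{n-1}$. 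Expanding any $w\in L^2(\R\times\Ss^{n-1})$ in spherical harmonics, $w(t,\theta)=\sum_{k\geq 0}w_k(t)Y_k(\theta)$, the kernel problem reduces to showing that each of the one-dimensional nonlocal operators
\[
\mathbb{L}_{v_\varepsilon,k} \; := \; P_{s,k}+c_{n,s}-\tfrac{n+2s}{n-2s}\,c_{n,s}\,v_\varepsilon^{\frac{4s}{n-2s}}
\]
has trivial $L^2(\R)$-kernel, where $P_{s,k}$ is the $k$-mode of the cylindrical Paneitz operator whose explicit Fourier symbol is recorded in \cite{Azahara2018}. A coercivity argument based on the growth of the symbol of $P_{s,k}$ disposes of all modes $k\geq k_0(n)$, so only finitely many low modes require detailed analysis.

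For each low mode, the periodicity of $v_\varepsilon$ (with fundamental period $T_\varepsilon$) makes $\mathbb{L}_{v_\varepsilon,k}$ translation-invariant by $T_\varepsilon$. By Floquet--Bloch decomposition, its $L^2(\R)$-spectrum is the union over quasi-momenta $\xi\in[-\pi/T_\varepsilon,\pi/T_\varepsilon]$ of the discrete spectra of fiber operators $\mathbb{L}_{v_\varepsilon,k}(\xi)$ acting on one period with quasi-periodic boundary conditions; a nontrivial $L^2$-kernel exists precisely when $0$ is a fiber eigenvalue at some interior $\xi$. At $s=1$, the known symmetries of the classical Delaunay family (translations in $t$, conformal dilations, and rotations of $\Ss^{n-1}$) produce explicit bounded non-$L^2$ Jacobi fields that correspond only to the boundary quasi-momenta $\xi=0$ or $\xi=\pi/T_\varepsilon$ in modes $k=0$ and $k=1$. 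Consequently, the local operator has a spectral gap around $0$ at every interior $\xi$, and its $L^2(\R)$-kernel is trivial.

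Finally, since $P_{s,k}$ depends continuously on $s$ in the norm-resolvent sense and $v_\varepsilon^{s}\to v_\varepsilon^{1}$ uniformly on each period, the fiber family $\mathbb{L}_{v_\varepsilon,k}(\xi)$ is norm-resolvent continuous in $s$; spectral continuity then preserves the gap for $s\in(1-\delta,1)$. The hard part is the uniformity of $\delta$ in $\varepsilon\in(0,1]$: as $\varepsilon\to 0^+$ the period $T_\varepsilon\to\infty$ and the Delaunay profile degenerates to a spherical Fowler-type soliton, so the Bloch argument on an unbounded interval must be replaced by a direct localized analysis near the concentration point. This is exactly where the sharp \emph{a priori} bound for the conformal factor established earlier in the paper plays the decisive role, by preventing the potential $v_\varepsilon^{4s/(n-2s)}$ from concentrating and thereby ensuring an $\varepsilon$-uniform spectral gap for all low modes.
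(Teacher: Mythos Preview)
Your approach is genuinely different from the paper's. The paper does \emph{not} use Floquet--Bloch theory or spectral perturbation at all. Instead it recycles the contradiction scheme of Theorem~\ref{main_thm}: given $s_k\nearrow 1$, $\varepsilon_k\in(0,1]$ and $w_k\in L^2$ with $\mathbb{L}_k(w_k)=0$, it subtracts suitable multiples of the two geometric Jacobi fields $\dot v_{\varepsilon_k}$ and $\partial_\varepsilon v_\varepsilon|_{\varepsilon_k}$ so that the adjusted function has vanishing value and derivative at $t=0$, normalizes by the sup on $[-L,L]$, and then passes to the $s\to1^-$ limit exactly as in \S\ref{sec:loc_uniq}--\S\ref{sec:glob_uniq}. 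The limit solves a second-order linear ODE with zero Cauchy data, hence vanishes identically, contradicting the normalization. No mode decomposition, no Bloch fibers, no spectral-gap bookkeeping. The advantage of the paper's route is that uniformity in $\varepsilon$ is automatic: the sequence $\varepsilon_k$ is arbitrary, and the compactness comes from the ODE at $s=1$ together with the weighted-norm argument of Lemma~\ref{lem:no-fast-left-decay}.

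Your proposal has a real gap precisely at the point you flag as ``the hard part''. The claim that the sharp sup bound of Proposition~\ref{prop:sharp_sup_bound} ``prevents the potential $v_\varepsilon^{4s/(n-2s)}$ from concentrating'' is not correct. That proposition gives $v_\varepsilon\le\hat c_{n,s}$ uniformly, but as $\varepsilon\to0^+$ the Delaunay profile is a chain of nearly-spherical bubbles separated by long necks of length $T_\varepsilon\to\infty$, so the potential \emph{does} concentrate: it is close to $\hat c_{n,s}^{4s/(n-2s)}$ on short intervals and nearly zero on the long necks. In this regime the Floquet fiber operators on a period of length $T_\varepsilon$ have spectral gaps that collapse (the band structure degenerates to the spectrum of the linearization about a single bubble on $\R$), and your norm-resolvent continuity in $s$ gives you nothing uniform in $\varepsilon$. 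You would need a separate limiting analysis---essentially the nondegeneracy of the bubble together with a gluing argument controlling the interaction between neighboring bubbles---and this is not supplied. A secondary issue: the convergence $P_{s,k}\to -\partial_t^2+\mu_k$ that the paper establishes (via the kernel estimates \eqref{eq:I1bound}--\eqref{eq:I3bound}) is pointwise on sufficiently regular functions, not norm-resolvent; upgrading it uniformly over the Bloch parameter and over $\varepsilon\in(0,1]$ is itself nontrivial and not addressed.
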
 

In the scalar curvature setting, the 
conformal factor satisfies a second order ODE, and so the uniqueness follows very 
quickly from phase-plane analysis. However, as demonstrated in \cite{Azahara2018}, one cannot 
apply these techniques directly in the nonlocal setting. Nonetheless, we can prove 
uniqueness if $s$ is close to, but less than one.

Our proof of uniqueness relies on two new ideas. In \S~\ref{sec:sharp_upper_bound},
we use the 
fact that the spherical solution has Morse index one to derive a sharp 
{\it a priori} upper bound for a solution to the $Q$-curvature equation of 
order $s$ on a twice-punctured sphere, with the spherical solution 
giving us a universal upper bound. This allows us to associate a Delaunay solution 
with the same $L^\infty$-norm to any solution on the twice-punctured sphere. We
prove Theorem \ref{main_thm} by contradiction, assuming there exists a sequence 
$s_k \nearrow 1$ and solutions $v_k$ that are not Delaunay solutions. Letting 
$\overline{v}_k$ be the matched Delaunay solution, in \S~\ref{sec:loc_uniq}
we prove that a rescaling of the difference $v_k - \overline{v}_k$ 
converges locally to a solution of a homogeneous second order ODE. In 
\S~\ref{sec:glob_uniq} we extend this local convergence to global 
convergence. This limit must be zero by the conditions of the ODE, which 
contradicts the normalization of our rescaling. The passage from local 
convergence to global convergence relies on a sharp estimate for the integral 
kernel and the solution operator of the nonlocal equation, derived 
in \cite{MR3694655} and described below in \S~\ref{sec:del_existence}. 
We expect this strategy to be robust and to have applications in other 
fractional geometric problems, and indeed we adapt our technique to 
prove Theorem \ref{nondegen_thm} in \S~\ref{sec:nondegen}. 
We conclude this paper with a brief outlook in \S~\ref{sec:outlook}. First, 
in \S~\ref{sec:extension}, we discuss work in progress \cite{our_followup} in 
which we extend the classification to $s \in (1,1+\delta)$, where once again $\delta>0$ 
is sufficiently small. Next, in \S~\ref{sec:applications}, we describe some possible 
applications of our classification result, including refined 
asymptotics of solutions with isolated singularities and a general compactness 
result. 

\section{Qualitative properties of Delaunay solutions} 
In this section, we first describe the 
reduction to a one-dimensional problem. Next, we describe the 
Delaunay solutions. Finally, we recall that 
D\'avila, Del Pino and Sire \cite{DDS} (see also \cite{Andrade_Wei_Ye}) proved that 
the standard bubble has Morse index one. 

\subsection{Rotational symmetry} \label{sec:rot_symm}
Here, we describe why all the complete, constant $Q_s$-curvature metrics on a twice-punctured 
sphere depend only on one variable. 
We begin with a complete, conformally flat, constant $Q_s$-curvature 
metric $g = U^{\frac{4}{n-2s}} g_{\circ}$ on a twice-punctured sphere. After a 
conformal dilation, we can assume the two punctures are antipodal, and after a 
global rotation, we can take the punctures to be the north and south poles, which 
we denote respectively as $N$ and $S$. 
We now use stereographic projection to transfer the problem to 
$\R^n \backslash \{ 0 \}$. Following this change of coordinates, we arrive at \eqref{const_q_euc}.

One can now apply \cite{MR3366748} to see that $u$ is rotationally invariant. Alternatively, 
one can argue as follows. Ng\^{o} and Ye \cite{MR4438901}
prove that, because the singular set $\{ 0 \}$ has capacity $0$, the 
function $u$ is $s'$-superharmonic for each $0 < s' < s$. One can 
now use the method of moving planes to show that $u$ is radial. 
One can also use the method of moving planes in integral form (see
\cite{MR3626036} and \cite{MR2200258}) to show the radial symmetry directly. 

\subsection{Existence of Delaunay solutions} \label{sec:del_existence} 
The Delaunay solutions form a particularly nice family of rotationally 
symmetric solutions of \eqref{const_q_euc} on $\R^n \backslash \{ 0 \}$, 
or, equivalently, of \eqref{const_q_sph} on $\Ss^n \backslash \{ N,S\}$. In 
the classical setting ({\it i.e.} $s=1$) the existence of the Delaunay solutions 
goes back to the 1930s, due to the work of Fowler \cite{fowler}. 
In the fractional setting, DelaTorre, Del Pino, Gonz\'alez and 
Wei \cite{MR3694655} first established their existence provided 
$s\in(0,1)$. Later, Jin and Xiong \cite{MR4266239} extended this 
construction to the range of exponents $s\in (1,\tfrac{n}{2})$. 

The Emden-Fowler change of coordinates lies at the heart of 
this construction. Given $u:\R^n \backslash \{ 0 \} 
\rightarrow \R$, we let 
\begin{equation} \label{emden_fowler_change}
v (t,\theta) = \mathfrak{F} (u)(t,\theta) = e^{\left ( 
\frac{2s-n}{2}\right )t} u(e^{-t} \theta) \quad {\rm with} \quad t= -\ln |x| \quad {\rm and} \quad \theta = \frac{x}{|x|}, 
\end{equation} 
obtaining a new function $v:\R \times \Ss^{n-1} \rightarrow \R$. 
One can, of course, invert this change of coordinates, giving us  
$$u(x) = \mathfrak{F}^{-1}(v)(x) = |x|^{\frac{2s-n}{2}} v\left ( -\ln |x|, 
\frac{x}{|x|} \right ).$$

The construction in \cite{MR3694655} starts with the Ansatz that 
the solution $u$ has the form 
$$u(x) = |x|^{\frac{2s-n}{2}} \widetilde{v} (-\ln|x|),$$
where $\widetilde{v} : (0,\infty) \rightarrow (0,\infty)$ 
is a positive smooth function. In fact, one can use the {\it a priori} bounds of 
\cite{MR3366748} to show that $\widetilde{v}$ is bounded, and so the 
integral operators written below are convergent. 
This change of variables transforms \eqref{const_q_euc} into 
\begin{equation} \label{radial1} 
\kappa_{n,s}\int_0^\infty \int_{\Ss^{n-1}} \frac{(r^{\frac{2s-n}{2}} \widetilde{v}(r)
- \rho^{\frac{2s-n}{2}} \widetilde v(\rho))\rho^{n-1} }
{|r^2+\rho^2 - 2r \rho \langle \theta,\varphi\rangle |^{\frac{n+2s}{2}}} d\varphi d\rho 
=c_{n,s} u(x)^{\frac{n+2s}{n-2s}} 
\end{equation} 
with
$$\kappa_{n,s} = \pi^{-n/2} 2^{2s} \frac{\Gamma\left ( \frac{n}{2}+s\right )}
{\Gamma(1-s)} s.$$

They then make the change of variables $\overline{\rho} = \rho/r$. Using 
the identity 
$$\widetilde{v} (r) = \left ( 1-\overline{\rho}^{\frac{2s-n}{2}} \right ) 
\widetilde {v}(r) + \overline{\rho}^{\frac{2s-n}{2}} \widetilde{v}(r),$$
the equation \eqref{radial1} becomes 
\begin{equation} \label{radial2} 
\kappa_{n,s} \int_0^\infty \int_{\Ss^{n-1}} \frac{\overline{\rho}^{n-1-\frac{n-2s}{2}}
(\widetilde {v}(r) - \widetilde{v}(\overline{\rho} r))}
{|1+\overline{\rho}^2 - 2\overline{\rho} \langle \theta, \varphi
\rangle |^{\frac{n+2s}{2}}} d\overline{\rho} d\varphi+ A \widetilde{v}(r) = c_{n,s} \widetilde {v}
(r)^{\frac{n+2s}{n-2w}},
\end{equation} 
where 
$$A = \int_0^\infty \int_{\Ss^{n-1}} \frac{(1-\overline{\rho}^{\frac{2s-n}{2}})
\overline{\rho}^{n-1}} {(1+\overline{\rho}^2 - 2\overline{\rho} \langle \theta, \varphi\rangle 
)^{\frac{n+2s}{2}}} d\varphi d\overline{\rho}$$ 
is a positive constant; in fact, our normalizations imply $A=c_{n,s}$ (see \cite[Proposition 2.7]{Azahara2018}). 
Finally, we complete the Emden-Fowler 
change of variables by letting $t=-\ln r$, $\tau=-\ln \rho$ and 
$v(t) = \widetilde{v} (-\ln r)$, 
which transforms \eqref{radial2} into 
\begin{equation}\label{eq_log-cyl}\tag{$\widehat{\mathcal{Q}}_{n,s}$}
    P_s(v)+c_{n,s}v= c_{n,s} v^{\frac{n+2s}{n-2s}} \quad {\rm in} \quad \mathbf{R},
\end{equation}
where 
\begin{equation} \label{cyl_GJMS1} 
P_s (v) (t) = \int_{-\infty}^\infty 
(v(t) - v(\tau))K_s(t-\tau) d\tau
\end{equation} 
and
\begin{equation*}
K_s(\xi) = \kappa_{n,s}
\int_{\Ss^{n-1}} \frac{e^{\left (\frac{2s-n}{2}\right )\xi}}
{(1+e^{-2\xi} -2 e^{-2\xi}\langle \theta, \varphi \rangle)^{\frac{n+2s}{2}}}d\varphi. 
\end{equation*}

From this formulation, one can search for periodic solutions with minimal 
period $L$ by setting 
\begin{equation}\label{periodickernal}
    K_{s,L} (t-\tau) = \sum_{j\in \mathbb{Z}} K_s(t-\tau-jL)
\end{equation}
and minimizing 
\begin{equation} \label{del_min}
\mathcal{E}_{s,L}(v) = \frac{\frac{1}{2} \int_0^L\int_0^L (v(t) - v(\tau))^2 K_{s,L}(t-\tau)dt d\tau 
+ c_{n,s}\int_0^L v(t)^2 dt}
{\left ( \int_0^L v(t)^{\frac{2n}{n-2s}}dt \right )^{\frac{n-2s}{n}}} ,
\end{equation} 
where one minimizes over all the functions such that the numerator is 
finite. The main theorem of \cite[Theorem~1.1]{MR3694655} states that for each 
$L>0$ the functional $\mathcal{E}_{s,L}$ in \eqref{del_min} has a nonzero 
minimizer. Furthermore, when $0<L\ll1$ is sufficiently small, this minimizer is a 
positive constant; however, it is not constant when $L\gg1$ is sufficiently large. 

We complete this description by remarking that the preceding paragraphs 
classify the Delaunay solutions in the cylindrical gauge. We can subsequently 
change coordinates to write the Delaunay solution in the Euclidean or the 
spherical gauges. In the Euclidean gauge, we have 
\begin{equation}\label{eq:Delaunay_eucledian}
\bar{u}_\varepsilon (x) = \mathfrak{F}^{-1} (v_\varepsilon) (x) = 
|x|^{\frac{2s-n}{2}} \bar{v}_\varepsilon (-\ln |x|).     
\end{equation}
One can then transfer this solution to the sphere using stereographic 
projection, obtaining 
$${U}_\varepsilon  = \bar{u}_\varepsilon \circ \Pi= \mathfrak{F}^{-1}\circ 
\bar{v}_\varepsilon \circ \Pi$$

\subsection{Morse index and stability} \label{sec:morse_index}
We discuss the Morse index of the standard Aubin--Talenti bubbles classified by Chen, Li and Ou \cite[Theorem~1.1]{MR2131045}. 
Let us recall that this family of bubbles of order 
$s\in (0,n/2)$ 
solving \eqref{const_q_euc} such that $\lim_{x\to 0}u(x)<\infty$ is given by 
$$u_{x_0,\varepsilon}(x)=u_{\circ} \left(\frac{x-x_0}{\varepsilon}\right) = \hat{c}_{n,s}\left (\frac{\varepsilon+|x-x_0|^2}{2\varepsilon} \right )^{\frac{2s-n}{2}},$$
for some $x_0\in \mathbf{R}^n$ and $\varepsilon>0$,
where $u_{0,1}=u_\circ$ is the spherical solution defined as \eqref{sph_soln1}.
These solutions transform, after the Emden-Fowler change of variables, into 
$$v_{t_0,\varepsilon}(t)=v_{\circ}\left(\frac{t-t_0}{\varepsilon}\right) = \hat{c}_{n,s} \cosh \left(\frac{t-t_0}{\varepsilon}\right)^{\frac{2s-n}{2}}$$
for some $t_0\in \mathbf{R}$ and $\varepsilon>0$, where $v_{0,1}(t)=v_\circ(t)=\hat{c}_{n,s}\cosh(t)^{\frac{2s-n}{2}}$.

The main result of D\'avila, del Pino and Sire \cite[Theorem~1.1]{DDS} characterizes the kernel of 
\begin{equation} \label{jac_op_sph1}
\overset{\circ}{\mathcal{L}}_s = (-\Delta)^s - \tfrac{n+2s}{n-2s} c_{n,s} 
u_{\circ}^{\frac{4s}{n-2s}}.\end{equation} 
In fact, they are generated by the Jacobi fields arising from the action of two conformal transformations, namely translations and dilations. 
Later, Andrade, Wei and Ye \cite[Lemma~4.6]{Andrade_Wei_Ye} (see also \cite[Lemma~5.1]{MR4929821})
extended this nondegeneracy result to the full range of exponents, namely 
$s\in (0,\frac{n}{2})$. 
Changing from the Euclidean gauge to the cylindrical 
gauge transforms the spherical linearized operator into 
\begin{eqnarray*} 
\overset{\circ}{\mathcal{L}}_{s} (v) (t) & = & P_s(v)(t) + c_{n,s} v(t) - 
\tfrac{n+2s}{n-2s} c_{n,s} v_{\circ}(t)^{\frac{4s}{n-2s}}
v(t) \\ 
& = & \int_{-\infty}^\infty K_s(t-\tau) (v(t) - v(\tau)) d\tau 
+ c_{n,s} v(t) - \tfrac{n+2s}{n-2s} c_{n,s} v_{\circ}(t)^{\frac{4s}{n-2s}} v(t) .
\end{eqnarray*} 

\begin{theoremletter}[\cite{DDS}]
Let $n \in \N$ and $s\in(0,1)$ with $n \geq 3$.
There holds
\[
{\rm ker}(\overset{\circ}{\mathcal{L}}_{s})\cap L^\infty(\mathbb{R})= \operatorname{span} \left \{\dfrac{\partial u_{\circ}}{\partial x^i} : 
i=0,1, \dots, n\right \},
\] 
where 
\[
\dfrac{\partial u_{\circ}}{\partial x^0}(x):=\tfrac{n-2s}{2} u_{\circ}(x) + 
\langle x, \nabla u_{\circ} \rangle.
\]
In particular, the Morse index of the spherical solutions 
\[
u_\circ(x)=\hat{c}_{n,s}\left(\frac{1+|x|^2}{2}\right)^{\tfrac{2s-n}{2}} \quad {\rm and} \quad v_{\circ}(t) = \hat{c}_{n,s}\cosh (t)^{\frac{2s-n}{2}}\]
are both equal to one. 
\end{theoremletter}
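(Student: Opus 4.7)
The plan is to exploit the conformal covariance of $(-\Delta)^s$ to conjugate $\overset{\circ}{\mathcal{L}}_s$ to a spectral problem for the Paneitz operator $\mathbb{P}_s$ on $(\mathbf{S}^n,g_\circ)$, and then diagonalize using spherical harmonics.

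First I would use the intertwining identity
\begin{equation*}
(-\Delta)^s\bigl(u_\circ\,(\phi\circ\Pi^{-1})\bigr) \;=\; u_\circ^{\frac{n+2s}{n-2s}}\,(\mathbb{P}_s\phi)\circ\Pi^{-1}
\end{equation*}
where $\Pi$ is stereographic projection. Writing any $w\in \ker\overset{\circ}{\mathcal{L}}_s\cap L^\infty$ in the form $w = u_\circ\,(\phi\circ\Pi^{-1})$ and dividing by the conformal factor $u_\circ^{(n+2s)/(n-2s)}$ reduces $\overset{\circ}{\mathcal{L}}_s w = 0$ to the eigenvalue equation $\mathbb{P}_s\phi = \mu_1\phi$ on $\mathbf{S}^n$, where $\mu_1 = (n+2s)/(n-2s)$ once the normalization constants in $c_{n,s}$ and $\hat{c}_{n,s}$ are tracked and $\mathbb{P}_s$ is normalized to act on constants as the identity.

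Next I would diagonalize $\mathbb{P}_s$ using Branson's formula \eqref{paneitz_op_sph1}. Since the Laplacian $-\Delta_{g_\circ}$ has eigenvalue $k(k+n-1)$ on the $k$-th spherical harmonic space $\mathcal{H}_k$, the operator $B$ has eigenvalue $k+(n-1)/2$, so
\begin{equation*}
\lambda_k(s) \;=\; \frac{\Gamma(\frac{n}{2}-s)}{\Gamma(\frac{n}{2}+s)}\cdot\frac{\Gamma(k+\frac{n}{2}+s)}{\Gamma(k+\frac{n}{2}-s)} ,
\end{equation*}
a strictly increasing sequence with $\lambda_0 = 1$ and, by the functional equation of $\Gamma$, $\lambda_1 = (n+2s)/(n-2s) = \mu_1$. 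Hence $\ker(\mathbb{P}_s-\mu_1) = \mathcal{H}_1$, the $(n+1)$-dimensional space of restrictions of degree-one harmonic polynomials, and pulling back via the conjugation shows $\dim(\ker\overset{\circ}{\mathcal{L}}_s\cap L^\infty)\le n+1$. The reverse inclusion is obtained by differentiating the $(n+1)$-parameter family of Aubin--Talenti bubbles $u_{x_0,\varepsilon}$ at $(x_0,\varepsilon)=(0,1)$: the $n$ translation directions yield $\partial u_\circ/\partial x^i$ for $i=1,\dots,n$, and the dilation yields $\frac{n-2s}{2}u_\circ + \langle x,\nabla u_\circ\rangle$. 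Each is manifestly bounded, lies in the kernel by differentiating the equation, and linear independence is inherited from the underlying group action; dimension counting closes the characterization.

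The Morse index statement is then automatic from the spectral decomposition: the quadratic form $\langle\phi,(\mathbb{P}_s-\mu_1)\phi\rangle_{L^2(\mathbf{S}^n)}$ is negative on $\mathcal{H}_0$ (since $\lambda_0 = 1<\mu_1$), zero on $\mathcal{H}_1$, and positive on $\bigoplus_{k\ge 2}\mathcal{H}_k$, so the index is $\dim\mathcal{H}_0 = 1$. The cylindrical version follows from the Emden--Fowler change $\mathfrak{F}$, which is a unitary conjugation preserving the sign of the quadratic form, sending $u_\circ$ to $v_\circ$ and the spherical Jacobi fields to their cylindrical counterparts.

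The main obstacle is the passage from the $L^\infty$ hypothesis to the $L^2$ setting on the sphere where the spectral theorem applies: an arbitrary bounded kernel element need not a priori decay sufficiently fast for $\phi = w/u_\circ$ to lie in $L^2(\mathbf{S}^n)$. This is resolved by a sharp decay estimate of the form $|w(x)|\lesssim (1+|x|)^{-(n-2s)}$, obtained by combining a Kelvin transform with fractional elliptic regularity at infinity. A secondary technical point is the rigorous justification of the conformal intertwining for the singular integral $(-\Delta)^s$; this is cleanest through the Caffarelli--Silvestre extension, which recasts the fractional operator as a local degenerate-elliptic Dirichlet-to-Neumann map and transports the conformal change into a standard weighted-Sobolev computation.
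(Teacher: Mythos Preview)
The paper does not supply its own proof of this statement: Theorem~A is quoted verbatim from \cite{DDS} (with the extension to higher $s$ attributed to \cite{Andrade_Wei_Ye} and \cite{MR4929821}) and is used as a black box in the subsequent arguments. There is therefore no ``paper's proof'' to compare against.

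That said, your spherical-harmonics approach is the natural spectral route and is essentially correct in outline. The eigenvalue computation $\lambda_1=(n+2s)/(n-2s)$ is right, and once the problem is reduced to $\mathbb{P}_s\phi=\mu_1\phi$ on $L^2(\mathbf{S}^n)$ the conclusion is immediate. The genuine content, which you correctly flag, is the passage from $w\in L^\infty$ to $\phi=w/u_\circ\in L^2(\mathbf{S}^n)$: since $u_\circ(x)\sim|x|^{2s-n}$ at infinity, a merely bounded $w$ could give $\phi$ blowing up like $\operatorname{dist}(\cdot,N)^{-(n-2s)}$ near the north pole, which fails to be $L^2$ when $s\le n/4$. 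Your proposed fix via a Kelvin transform plus regularity to upgrade to $|w(x)|\lesssim(1+|x|)^{-(n-2s)}$ is the standard remedy and does work, but it is not a triviality and deserves a precise statement. The original proof in \cite{DDS} proceeds instead through the Caffarelli--Silvestre extension and a direct analysis of the degenerate-elliptic problem in the half-space, which sidesteps the $L^\infty$-to-$L^2$ issue at the cost of working in one more dimension.
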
 

The last result states that the standard bubble solution above is nondegenerate in a sense, the set of bounded solutions in the kernel of the spherical linearized operator is spanned by the functions
		\begin{equation*}
			\dfrac{\partial u_{\circ}}{\partial x^0}(x)=\tfrac{n-2s}{2} u_{\circ}(x) + 
            \langle x, \nabla u_{\circ} \rangle \quad \text { and } \quad
			\dfrac{\partial u_{\circ}}{\partial x^i}(x) \quad {\rm for} \quad i\in \{1,\dots,n\}. 
		\end{equation*}
On a related note, we observe that the results of \cite[Theorem~1.2]{Azahara_Sergio_Ruiz} prove that all the Delaunay solutions (and its deformations) $\bar{u}_{\varepsilon}\in\mathcal{C}^\infty(\mathbf{R}^n\setminus\{0\})$ and $\bar{v}_{\varepsilon}\in\mathcal{C}^\infty(\mathbf{R})$ described above have infinite Morse index. 

\section{A priori and sharp supremum estimates}
In this section, we establish two types of upper bounds for positive solutions to the critical 
fractional equation on the punctured sphere and its cylindrical counterpart. We begin with a 
general upper bound in the setting that the singular set is a closed subset of $\Ss^n$ with 
zero capacity. This estimate, though quite general, is not sharp enough for our purposes, so after 
establishing this general upper bound we specialize to the case of a twice-punctured sphere. 
Using the spectral properties of the linearized operator around the spherical 
profile \(v_{\circ}\), we show that this profile realizes the global supremum among all positive 
solutions to the logarithmic cylindrical equation. This rigidity result exploits the fact 
that \(v_{\circ}\) has Morse index one and illustrates the optimality of the bubble in 
this conformal class.

\subsection{Upper bound estimate} \label{sec:general_upper_bound}
In this section, we derive {\it a priori} upper bounds in a more general setting. 

\begin{lemma}\label{lm:sharpestimate}
    Let $n \in \N$ and $s\in(0,1)$ with $n \geq 3$, and let $\Lambda\subset \Ss^n$ be a closed set with zero capacity.
    If  
    $U:\Ss^n \backslash \Lambda \rightarrow (0,\infty)$ satisfies \eqref{const_q_sph}, then there exists $C>0$ depending only on $n$ and $s$ such that  
    \begin{equation} \label{general_upper_bound} 
        U(x) \leq C\operatorname{dist}_{g_{\circ}}(x,\Lambda)^{\frac{2s-n}{2}}.
    \end{equation} 
\end{lemma}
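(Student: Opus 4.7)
The plan is a contradiction-via-blowup argument, using the doubling lemma of Pol\'a\v{c}ik-Quittner-Souplet to concentrate the failure of the asserted bound, and then invoking the Chen-Li-Ou classification to close the contradiction via the sharp inequality $\hat{c}_{n,s}>1$ recorded after \eqref{sph_soln1}. Equivalently, the desired estimate asserts boundedness of the conformally invariant density $M(x):=U(x)^{2/(n-2s)}\operatorname{dist}_{g_{\circ}}(x,\Lambda)$ on $\Ss^n\setminus\Lambda$.

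Suppose by contradiction that $M$ is unbounded. Applying the Pol\'a\v{c}ik-Quittner-Souplet doubling lemma to the locally bounded function $U^{2/(n-2s)}$ on the open set $\Ss^n\setminus\Lambda$ (with ``obstacle'' set $\Lambda$) produces, for each $k\in\N$, points $x_k\in\Ss^n\setminus\Lambda$ such that, setting $r_k:=U(x_k)^{-2/(n-2s)}$, one has $U(x_k)\to\infty$, $r_k/\operatorname{dist}_{g_{\circ}}(x_k,\Lambda)\to 0$, and
\[
U(z)\leq 2^{(n-2s)/2}U(x_k)\qquad\text{for all }z\in B_{g_{\circ}}(x_k,k r_k).
\]
Working in normal coordinates centered at $x_k$, I would introduce the rescaled profile
\[
v_k(z):=r_k^{(n-2s)/2}U\!\left(\exp_{x_k}(r_k z)\right),\qquad z\in B(0,k)\subset\R^n,
\]
so that $v_k(0)=1$ and $v_k\leq 2^{(n-2s)/2}$ on $B(0,k)$. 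Since $r_k\to 0$ faster than $\operatorname{dist}_{g_{\circ}}(x_k,\Lambda)$, the rescaled singular set recedes to infinity and $v_k$ is smooth on every fixed compact set for $k$ large. Using the conformal covariance of $\mathbb{P}_s$ together with the asymptotic flatness of $g_{\circ}$ in dilated normal coordinates, $v_k$ solves $(-\Delta)^s v_k=c_{n,s}v_k^{(n+2s)/(n-2s)}$ on any fixed compact subset of $\R^n$, modulo a perturbation that vanishes uniformly as $r_k\to 0$.

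Standard interior fractional H\"older regularity (via Silvestre's estimates or the Caffarelli-Silvestre extension), combined with the uniform $L^\infty$ bound, yields equicontinuity on compacts, and Arzel\`a-Ascoli then furnishes a subsequential locally uniform limit $v_\infty:\R^n\to[0,2^{(n-2s)/2}]$ with $v_\infty(0)=1$, satisfying $(-\Delta)^s v_\infty=c_{n,s}v_\infty^{(n+2s)/(n-2s)}$ on all of $\R^n$. The strong maximum principle for the fractional Laplacian forces $v_\infty>0$ on $\R^n$, so by the Chen-Li-Ou classification \cite{MR2131045}, $v_\infty$ is an Aubin-Talenti bubble $u_{x_0,\varepsilon}$. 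A direct computation shows that the maximum of any such bubble equals $\hat{c}_{n,s}\cdot 2^{(n-2s)/2}$, attained at its center; comparing with the ceiling $v_\infty\leq 2^{(n-2s)/2}$ then forces $\hat{c}_{n,s}\leq 1$, contradicting $\hat{c}_{n,s}>1$.

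The main obstacle I anticipate is the control of tail contributions to the nonlocal operator $(-\Delta)^s v_k(z)$ coming from the region $|z|>k$, needed to pass $v_k\to v_\infty$ cleanly in the equation. One must extend $v_k$ globally to $\R^n$ using the original $U$ and show this tail is uniformly small; this follows from a preliminary coarse {\it a priori} $L^\infty$ bound on $U$ at fixed positive distance from $\Lambda$, which is routine once one exploits that $\Lambda$ has zero capacity. A secondary technical point is a careful verification that the rescaled spherical fractional Paneitz operator $\mathbb{P}_s$ converges to the Euclidean $(-\Delta)^s$ in an appropriate topology; this is expected from conformal covariance and the Branson formula \eqref{paneitz_op_sph1}, but requires tracking the lower-order terms through the rescaling.
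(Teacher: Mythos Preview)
Your blow-up framework via the doubling lemma is reasonable up through the extraction of an entire limit $v_\infty$ and its identification as a bubble via Chen--Li--Ou. The closing step, however, is wrong: the assertion that ``the maximum of any such bubble equals $\hat{c}_{n,s}\cdot 2^{(n-2s)/2}$'' is false. The full family of positive entire solutions to $(-\Delta)^s u = c_{n,s} u^{(n+2s)/(n-2s)}$ is obtained from $u_\circ$ by the conformal scaling $u_{x_0,\lambda}(x)=\lambda^{(n-2s)/2}u_\circ\bigl(\lambda(x-x_0)\bigr)$, whose maximum equals $\lambda^{(n-2s)/2}\,\hat{c}_{n,s}\,2^{(n-2s)/2}$ and ranges over all of $(0,\infty)$ as $\lambda$ varies. (The formula for $u_{x_0,\varepsilon}$ quoted in \S\ref{sec:morse_index}, with $\varepsilon$-independent maximum, is not a solution of the \emph{same} equation for $\varepsilon\neq 1$; only the correctly scaled family is.) Hence the constraints $v_\infty(0)=1$ and $v_\infty\le 2^{(n-2s)/2}$ are entirely compatible with $v_\infty$ being a bubble---for instance the one centered at the origin with peak value $1$---and no inequality on $\hat{c}_{n,s}$ follows. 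This is exactly where the critical exponent defeats the naive doubling strategy: for subcritical nonlinearities the Liouville theorem forbids nontrivial entire solutions and the blow-up closes immediately, but at the critical exponent bubbles exist and one needs an additional mechanism to derive a contradiction. A further warning sign is that your argument never uses the zero-capacity hypothesis on $\Lambda$.

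The paper supplies that missing mechanism geometrically. Before blowing up, it invokes the capacity-zero hypothesis (via Ng\^o--Ye superharmonicity) together with the fractional moving-planes argument of Qing--Raske to show that every Euclidean ball contained in the regular set has \emph{mean-convex} boundary for the conformal metric $g_u=u^{4/(n-2s)}g_{\rm flat}$. After blow-up the limit is a standard bubble, i.e.\ a round sphere, for which sufficiently large balls have \emph{mean-concave} boundary; transferring this back to the sequence produces, for $k$ large, a Euclidean ball whose boundary is mean-concave for $g_{u_k}$, contradicting the convexity just established. The contradiction is thus geometric, not a numerical comparison with a single bubble profile.
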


\begin{remark} 
In the case that $\Lambda$ is finite, this result follows 
from \cite[Proposition 3.1]{MR3366748}. This more general proof when $\Lambda$ is 
a closed set with zero capacity follows the model first used by 
Pollack in \cite[Theorem 4.3]{MR1266101} and has since been adapted in a number of other 
places, such as \cite{Marques_iso_blowup}, \cite{MR2219215}, \cite{chang-han-yang}, and 
\cite{cmpt_6th_order}. Since this same argument has appeared now a number of times 
in the literature, we only provide its sketch. 
\end{remark} 

\begin{proof}
We argue by contradiction, adapting the blow-up argument of Pollack~\cite[Theorem 4.3]{MR1266101}, 
combined with the convexity argument of Schoen~\cite{MR1173050} and its conformal extension by 
Qing and Raske~\cite[Theorem 4.1]{MR2219215}.

Fix a regular point $x_\infty \in \Ss^n \setminus \Lambda$ and consider the stereographic projection 
with pole at $x_\infty$, identifying $\Ss^n \setminus \{x_\infty\}$ with $\R^n$. Let 
$u:\R^n \setminus \widehat \Lambda \rightarrow (0,\infty)$ be the stereographic projection of $U$, 
where $\widehat \Lambda$ is the image of $\Lambda$ under the projection. Then $u$ satisfies the 
Euclidean conformally invariant equation
\[
(-\Delta)^s u = c_{n,s} u^{\frac{n+2s}{n-2s}} \quad \text{in} \quad \R^n \setminus \widehat \Lambda,
\]
where $\widehat \Lambda$ is a finite set. Since the singular set $\widehat \Lambda$ has capacity 
zero, the results of~\cite{MR4438901} imply that $u$ is $s'$-superharmonic for every $s'\in(0,s)$. 

Next, we define the conformal metric $g_u = u^{\frac{4}{n-2s}} g_{\rm flat}$. By applying the moving 
planes method of Schoen~\cite{MR1173050} adapted to the fractional setting by 
Qing and Raske~\cite[Theorem 4.1]{MR2219215}, we deduce that every Euclidean ball in the 
domain $\R^n \setminus \widehat \Lambda$ is strictly mean-convex with respect to $g_u$. This 
key convexity property will be used to obtain a contradiction.

Now, we suppose by contradiction that the desired estimate~\eqref{general_upper_bound} fails. 
Then, there exists a sequence of data $\Lambda_k$, solutions $u_k$, points $x_{0,k}$, and 
radii $\rho_k>0$ such that $B_{\rho_k}(x_{0,k}) \subset \R^n \setminus \widehat \Lambda_k$ and 
the function
\[
f_k(x) := \left(\rho_k - |x - x_{0,k}|\right)^{\frac{2s-n}{2}} u_k(x)
\]
attains arbitrarily large supremum in $B_{\rho_k}(x_{0,k})$. That is, there exists 
$x_{1,k} \in B_{\rho_k}(x_{0,k})$ such that
\[
M_k := f_k(x_{1,k}) = \sup_{x \in B_{\rho_k}(x_{0,k})} f_k(x) \to \infty \quad \text{as} \quad k \to \infty.
\]
We define the scaling factor $S_k := \frac{1}{2} M_k^{-\frac{2}{n-2s}}$, and consider the blow-up sequence
\[
w_k(y) := S_k^{\frac{n-2s}{2}} u_k(x_{1,k} + S_k y),
\]
which, by construction, satisfies $w_k(0) = {2^{\frac{2s-n}{2}}}$ and
\[
(-\Delta)^s w_k = c_{n,s} w_k^{\frac{n+2s}{n-2s}} \quad \text{in} \quad 
\Omega_k := \frac{1}{S_k} \left( \R^n \setminus \widehat \Lambda_k - x_{1,k} \right).
\]

Because the singular sets $\widehat \Lambda_k\subset\mathbf{R}^n$ diverge to infinity in the rescaled variables, and 
thanks to the uniform a priori Harnack inequality from~\cite[Theorem 1.2]{TanXiong2011}, we obtain 
local $\mathcal{C}^{2,\alpha}$ estimates for $w_k$ on compact subsets. Passing to a subsequence, we conclude 
that $w_k \to w_\infty$ in $\mathcal{C}^2_{\mathrm{loc}}(\R^n)$, where $w_\infty \in \mathcal{C}^2(\mathbf{R})$ satisfies $w_\infty(0) = {2^{\frac{2s-n}{2}}}$ and solves 
\[
(-\Delta)^s w_\infty = c_{n,s} w_\infty^{\frac{n+2s}{n-2s}} \quad \text{in} \quad \R^n.
\]
By the classification result of
Chen, Li and Ou \cite[Theorem 1.1]{MR2200258}
$w_\infty$ must be the standard spherical solution
\[
w_\infty(y) = \left( \frac{\varepsilon}{1 + \varepsilon^2 |y - y_0|^2} \right)^{\frac{n-2s}{2}}.
\]
for some $\varepsilon > 0$ and $y_0 \in \R^n$.

Since $w_k \to w_\infty$ locally uniformly, we find that for large $k\gg1$, the metrics $g_{u_k}$ 
become nearly spherical on large balls centered at $x_{1,k}$. In particular, there exists $R>0$ such 
that the ball $B_{R}(x_{1,k})$ has a boundary that is strictly mean-concave with respect to the 
conformal metric $g_{u_k}$. But this contradicts the earlier geometric result that all Euclidean 
balls must have strictly mean-convex boundaries for $g_{u_k}$, as deduced from the fractional 
moving planes method.

Finally, this contradiction proves the desired a priori upper bound~\eqref{general_upper_bound}. 
The constant $C>0$ depends only on $n$ and $s$, completing the proof.
\end{proof}

\subsection{Sharp supremum estimate} \label{sec:sharp_upper_bound}
In the special case where $\Lambda=\{N,S\}$ is a pair of points, we can 
transform this problem to the cylinder using \eqref{emden_fowler_change}, 
and so \eqref{general_upper_bound} reads 
$v(t) \leq C$, where $C$ is a positive constant depending on $n$ and $s$. 
However, because the proof of \eqref{general_upper_bound} uses an argument 
by contradiction, we do not obtain a sharp constant. 

In preparation, we need the following result. 
\begin{lemma} Let $n \in \N$ and $s\in(0,1)$ with $n \geq 3$.
If $v:\R \times \Ss^{n-1} \rightarrow (0,\infty)$ solves
\eqref{eq_log-cyl} we cannot have $v>v_{\circ}$ on all of $\R \times \Ss^{n-1}$. 
\end{lemma}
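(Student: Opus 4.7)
The plan is to argue by contradiction by pairing the equations for $v$ and $v_\circ$ through the self-adjointness of $P_s$. Using the rotational symmetry discussed in \S\ref{sec:rot_symm}, I regard $v$ as a function of $t$ alone; by Lemma~\ref{lm:sharpestimate} transferred to the cylinder, $v \in L^\infty(\R)$. Set $p := \tfrac{n+2s}{n-2s} > 1$, and suppose for contradiction that $v(t) > v_\circ(t)$ for every $t \in \R$.

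Multiplying \eqref{eq_log-cyl} for $v$ by $v_\circ$, multiplying the analogous equation for $v_\circ$ by $v$, integrating over $\R$, and subtracting, the terms $c_{n,s}\, v\, v_\circ$ cancel. Using that $P_s$ is symmetric (so that $\int (P_s v)\, v_\circ\, dt = \int v\, (P_s v_\circ)\, dt$), what remains is
\begin{equation*}
0 \;=\; c_{n,s}\!\int_\R v\, v_\circ\,\bigl(v^{p-1} - v_\circ^{p-1}\bigr)\, dt.
\end{equation*}
Under the assumption $v > v_\circ > 0$, strict monotonicity of $\tau \mapsto \tau^{p-1}$ makes the integrand strictly positive at every point; the decay $v_\circ(t) \lesssim e^{-(n-2s)|t|/2}$ together with $v \in L^\infty$ makes it integrable. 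Hence the right-hand side is strictly positive, yielding the sought contradiction.

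The one genuinely delicate step -- and the main obstacle -- is justifying the self-adjoint pairing, since $v$ need not decay (in fact, Delaunay profiles are periodic and bounded below away from zero), so $v \notin L^2(\R)$ and the duality $\int (P_s v)\, v_\circ\, dt = \int v\, (P_s v_\circ)\, dt$ is not a priori well-defined. I plan to repair this via a cutoff: replace $v_\circ$ by $v_\circ \eta_R$ for $\eta_R \in \mathcal{C}^\infty_c(\R)$ equal to $1$ on $[-R,R]$ and supported in $[-2R,2R]$. The commutator
\begin{equation*}
P_s(v_\circ \eta_R)(t) - \eta_R(t)\, P_s v_\circ(t) \;=\; \int_\R K_s(t-\tau)\, v_\circ(\tau)\, \bigl(\eta_R(t) - \eta_R(\tau)\bigr)\, d\tau,
\end{equation*}
once tested against the bounded function $v$, is controlled by tail integrals of $v_\circ$ against $K_s$ that vanish as $R \to \infty$, thanks to the exponential decay of $v_\circ$ and the asymptotics of $K_s$ derived in \cite{MR3694655,Azahara2018}. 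Dominated convergence on the remaining diagonal contribution yields the identity in the limit, closing the argument.
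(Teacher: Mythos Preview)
Your argument is correct and takes a genuinely different route from the paper's proof. The paper proceeds via a maximum-principle/barrier technique adapted from \cite{Azahara_Sergio_Ruiz}: it introduces the first Dirichlet eigenpair $(\lambda_1(L),\phi_1)$ of $P_s$ on $[-L,L]$, uses convexity of $\xi\mapsto\xi^{p}-\xi$ to bound $P_s(v-v_\circ)$ from below by $c_{n,s}\eta_1'(v_\circ)(v-v_\circ)$, and then chooses $\alpha>0$ so that $w:=v-v_\circ-\alpha\phi_1\ge0$ on all of $\R$ and touches zero at some interior point $t_0\in(-L_0,L_0)$. At that touching point $P_s(w)(t_0)\le0$ from the integral representation, while the convexity estimate combined with $\lambda_1(L_0)\le\frac{c_{n,s}}{2}$ forces $P_s(w)(t_0)>0$, a contradiction.

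Your pairing argument is more elementary and requires no spectral input. One remark on the justification of the symmetry: a cleaner path than the cutoff is to work directly with the symmetrized form
\[
B(v,v_\circ)=\tfrac12\iint_{\R^2} K_s(t-\tau)\bigl(v(t)-v(\tau)\bigr)\bigl(v_\circ(t)-v_\circ(\tau)\bigr)\,d\tau\,dt,
\]
which is \emph{absolutely} convergent: near the diagonal the integrand is $O(|t-\tau|^{1-2s})$ (using that $v,v_\circ\in C^2$ with globally bounded second derivatives, by interior regularity), and for $|t-\tau|\ge1$ the exponential decay of $K_s$ together with the integrability of $v_\circ$ suffices. Truncating the diagonal by $\epsilon$, applying Fubini, and letting $\epsilon\to0$ by dominated convergence then gives $\int(P_sv)\,v_\circ=B(v,v_\circ)=\int v\,(P_sv_\circ)$ directly. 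This sidesteps your commutator estimate, which as written is a bit delicate for $s>\tfrac12$ since the first-order bound $|\eta_R(t)-\eta_R(\tau)|\lesssim R^{-1}|t-\tau|$ alone does not make $K_s(t-\tau)|t-\tau|$ locally integrable. What the paper's approach buys instead is a genuinely pointwise comparison statement, reusable for related barrier arguments, at the cost of importing the Dirichlet eigenvalue machinery.
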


\begin{proof} 
This is an adaptation of the proof
of \cite[Proposition 2.5]{Azahara_Sergio_Ruiz}. Following their notation, for 
any $L>0$ we define 
$$\lambda_1(L)= \inf \left \{ \frac{\frac{1}{2} \int_\R \int_\R K_s(t-\tau) (v(t) - v(\tau))^2
dt d\tau}{\int_\R v^2(t) dt} : v\in H^s(\R)\backslash \{ 0 \} \ {\rm and} \ \operatorname{supp}(v)
\subset [-L,L] \right \} $$
and let $\phi_1\in H^s(\R)\backslash \{ 0 \}$ be the associated eigenfunction. 
They show in \cite[Lemma 2.4]{Azahara_Sergio_Ruiz} that the eigenvalue 
is positive for each $L>0$ and that $\displaystyle \lim_{L\rightarrow \infty} 
\lambda_1(L) = 0$, as well as the fact that the eigenfunction is a positive 
solution to
\begin{equation} \label{temp_eigen_eqn} 
P_s(\phi_1) = \lambda_1(L) \phi_1 \quad {\rm in} \quad (-L,L).
\end{equation} 

Now we analyze $P_s(v-v_\circ - \alpha \phi_1)$ on the interval 
$[-L,L]$ where $\alpha>0$ and $L>0$ are parameters to be chosen later. 
First we estimate $P_s(v-v_\circ)$. The function $\displaystyle \eta_1(\xi) = 
\xi^{\frac{n+2s}{n-2s}} - \xi$ is convex, so its graph lies above each of its 
tangent lines. In other words, 
$$\eta_1(v) \geq \eta_1'(v_\circ)(v-v_\circ) + \eta_1(v_\circ),$$
and so 
\begin{eqnarray} \label{convexity1}
P_s(v-v_\circ)(t) & = & c_{n,s} ((v(t))^{\frac{n+2s}{n-2s}} - 
v(t) - (v_\circ(t))^{\frac{n+2s}{n-2s}} +v_\circ(t)) \\ \nonumber 
& = & c_{n,s} (\eta_1(v)(t) - 
\eta_1(v_\circ)(t)) \\ \nonumber 
& \geq &c_{n,s} \eta_1'(v_\circ)(t) (v(t)-v_\circ(t)) \\ \nonumber 
& = & c_{n,s} 
\left ( \left ( \frac{n+2s}{n-2s} \right ) \hat{c}_{n,s}^{\frac{4s}{n-2s}} 
(\cosh t)^{-2s} - 1 \right ) (v(t) - v_\circ (t)). 
\end{eqnarray} 
If we choose $L_0>0$ such that 
$$\left ( \frac{n+2s}{n-2s} \right ) \hat c_{n,s}^{\frac{4s}{n-2s}} 
(\cosh L_0)^{-2s} = \frac{3}{2}$$
then \eqref{convexity1} implies 
\begin{equation} \label{convexity2} 
P_s(v-v_\circ)(t) \geq \frac{1}{2} c_{n,s} (v(t) - v_\circ(t)) \quad {\rm for} \quad t\in[-L_0,L_0].
\end{equation} 

Now, we choose $\alpha>0$ such that 
$$\frac{1}{\alpha} = \sup_{t\in[-L_0,L_0]}\frac{\phi_1(t)}{v(t) - v_\circ(t)}.$$
We then have 
\begin{equation} \label{convexity3}
w(t)= v(t) - v_\circ(t) - \alpha \phi_1(t) \geq 0,\end{equation}  
and, by continuity, there exists $t_0 \in [-L_0,L_0]$ such that $w(t_0) = 0$. 
Combining \eqref{convexity2} and \eqref{convexity3}, we obtain 
$$P_s(w)(t) \geq \left (\frac{1}{2} c_{n,s} - \lambda_1(L_0)\right ) 
(v(t) - v_\circ(t)),$$
so we can conclude that $P_s(w) \geq 0$ and use the maximum principle (see 
\cite[Proposition 2.2]{Azahara_Sergio_Ruiz}) as soon as we know 
$\lambda_1(L_0) \leq \frac{c_{n,s}}{2}$. However, this inequality follows from 
\cite[Eq. (2.4)]{Azahara_Sergio_Ruiz} and the subsequent scaling law. 
\end{proof}

\begin{proposition} \label{prop:sharp_sup_bound}
Let $n \in \N$ and $s\in(0,1)$ with $n \geq 3$.
If $v:\R \times \Ss^{n-1} \rightarrow (0,\infty)$ solves
\eqref{eq_log-cyl}, then 
$$\sup_{t\in\R} v(t) \leq \sup_{t\in\R} v_{\circ}(t) = \hat{c}_{n,s}.$$
\end{proposition}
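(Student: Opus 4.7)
The plan is to argue by contradiction via a sliding argument along translates of the spherical profile $v_\circ$, using the preceding lemma as the key obstruction.

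Suppose $\sup v > \hat c_{n,s}$. By the translation invariance of~\eqref{eq_log-cyl} I may assume $v(0) > \hat c_{n,s} = v_\circ(0)$. Each translate $v_\circ^{\tau}(t) := v_\circ(t-\tau)$ is again a solution of~\eqref{eq_log-cyl}, and by translation invariance the preceding lemma rules out the pointwise inequality $v > v_\circ^{\tau}$ on all of $\mathbf{R}$ for every $\tau \in \mathbf{R}$. The strategy is to exhibit $\tau^\star \in \mathbf{R}$ for which $v \geq v_\circ^{\tau^\star}$ holds globally with a touching $v(t^\star) = v_\circ^{\tau^\star}(t^\star)$, and then derive a contradiction via a strong maximum principle.

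To locate $\tau^\star$, I would slide $\tau$ continuously and track the quantity $\psi(\tau) := \inf_{t \in \mathbf{R}}\bigl(v(t) - v_\circ^{\tau}(t)\bigr)$. The uniform a priori bound $v \leq C$ provided by Lemma~\ref{lm:sharpestimate} (transferred to the cylindrical gauge via~\eqref{emden_fowler_change}) and the exponential decay $v_\circ^{\tau}(t) \to 0$ as $|t-\tau| \to \infty$ ensure that the infimum is attained at finite $t$ for each $\tau$, and that $\psi$ depends continuously on $\tau$. The preceding lemma gives $\psi(\tau) \leq 0$ for all $\tau$; combined with the localization $v(0) > \hat c_{n,s}$ and a continuity/compactness argument on the superlevel set $S := \{t \in \mathbf{R} : v(t) > \hat c_{n,s}\}$, one obtains a maximizer $\tau^\star$ with $\psi(\tau^\star) = 0$, yielding the desired touching configuration.

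At the touching point, set $w := v - v_\circ^{\tau^\star} \geq 0$ with $w(t^\star) = 0$. The convexity of $\eta_1(\xi) = \xi^{(n+2s)/(n-2s)} - \xi$, as in the proof of the preceding lemma, yields
\[
P_s w(t) \;\geq\; c_{n,s}\,\eta_1'\!\bigl(v_\circ^{\tau^\star}(t)\bigr)\,w(t).
\]
A nonlocal strong maximum principle then forces $w \equiv 0$, i.e., $v \equiv v_\circ^{\tau^\star}$, contradicting $v(0) > \hat c_{n,s} = \sup v_\circ^{\tau^\star}$. The main obstacle is rigorously carrying out the sliding in the nonlocal setting and applying the strong maximum principle with the sign-changing potential $\eta_1'(v_\circ^{\tau^\star})$: one must reconcile the requirement that the auxiliary interval $[-L_0, L_0]$ be large enough for the eigenvalue estimate $\lambda_1(L_0) \leq c_{n,s}/2$ from~\cite{Azahara_Sergio_Ruiz} with the requirement that $w \geq 0$ hold on a neighborhood where $\eta_1'(v_\circ^{\tau^\star}) > 0$. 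The sliding resolves this tension by producing $\tau^\star$ so that the favorable interval $[\tau^\star - L_0, \tau^\star + L_0]$ sits inside $S$, while the nonlocal tail contributions are controlled by the exponential decay of $v_\circ^{\tau^\star}$ and the uniform bound on $v$.
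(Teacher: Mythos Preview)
Your proposal has a genuine gap at the step where you produce the touching parameter $\tau^\star$ with $\psi(\tau^\star)=0$. The preceding lemma gives only the upper bound $\psi(\tau)\le 0$; it supplies no mechanism forcing $\sup_\tau\psi(\tau)=0$, and your ``continuity/compactness argument on the superlevel set $S$'' is not an argument but a hope. In a standard sliding scheme one begins from a configuration where the strict ordering holds (typically by sending the translate to infinity) and then slides until first contact. Here that starting configuration is precisely what the lemma forbids: for \emph{every} $\tau$ there is a point where $v\le v_\circ^\tau$, while simultaneously $v>v_\circ^\tau$ near $\pm\infty$ because $v_\circ^\tau$ decays and $v>0$. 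Hence $v-v_\circ^\tau$ changes sign for every $\tau$, which is consistent with $\psi(\tau)<0$ for all $\tau$ and no touching ever occurring. Your own closing paragraph flags difficulties with the potential and the interval $[-L_0,L_0]$, but the more basic obstruction is upstream: you have not exhibited, and likely cannot exhibit, a $\tau^\star$ at which the global ordering $v\ge v_\circ^{\tau^\star}$ holds.

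The paper bypasses this entirely by exploiting the Morse index of $v_\circ$ rather than a sliding/strong-maximum-principle dichotomy. From $v(0)>v_\circ(0)$ and the preceding lemma one knows $v-v_\circ$ is positive on a maximal interval around $0$ and must become negative on adjacent intervals $(t_{0,+},t_{1,+})$ and $(t_{1,-},t_{0,-})$ on either side. Truncating $v-v_\circ$ to each of these two intervals yields compactly supported test functions $w_\pm$; the strict convexity of $\xi\mapsto\xi^{(n+2s)/(n-2s)}$ makes the second variation of the energy at $v_\circ$ strictly negative on each of $w_+$ and $w_-$. Since their supports are disjoint, they span a two-dimensional negative subspace for the linearization at $v_\circ$, contradicting the fact that $v_\circ$ has Morse index one. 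This argument needs neither a global ordering nor a strong maximum principle, and it turns the very sign change that breaks your sliding into the source of the contradiction.
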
 

\begin{remark} 
In some sense, this estimate is similar to the height estimate 
of a constant mean curvature graph in \cite[Lemma 1.7]{KKS}. 
\end{remark} 

\begin{proof} 
Suppose by contradiction that there exists a solution \( v: \R \rightarrow (0,\infty) \) to \eqref{cyl_GJMS1} such that
\[
\sup_{t\in \R} v(t) > \sup_{t\in \R} v_{\circ}(t) = \hat{c}_{n,s}.
\]
After a translation, we may assume
\[
v(0) > v_{\circ}(0) = \hat{c}_{n,s} = \sup_{t\in \R} v_{\circ}(t).
\]
By the lemma above, we cannot have $v \geq v_{\circ}$, so  
there is a maximal open interval \( (t_{0,-}, t_{0,+}) \) containing \( 0 \) on which
\(v(t) > v_{\circ}(t).\)

Let us define
\[
t_{0,+} := \sup \left\{ t > 0 : v(\tau) > v_{\circ}(\tau) \text{ for all } \tau \in (0,t) \right\},
\]
and
\[
t_{0,-} := \inf \left\{ t < 0 : v(\tau) > v_{\circ}(\tau) \text{ for all } \tau \in (t,0) \right\}.
\]
Then, by the classification result in \cite[Theorem 1.1]{MR2200258}, we know that any solution \( v\in \mathcal{C}^2(\mathbf{R}) \) with fast-decay \( \lim_{|t| \to \infty}v(t) = 0 \) must be a translation of the spherical solution \( v_{\circ} \). Since \( v \not\equiv v_{\circ} \), there holds
\[
\limsup_{t \to \pm\infty} v(t) > 0.
\]
This ensures that the function \( v - v_{\circ} \) oscillates and changes sign. In particular, there exist maximal intervals \( (t_{0,+}, t_{1,+}) \) and \( (t_{1,-}, t_{0,-}) \) such that 
\[
v- v_{\circ}<0 \quad {\rm in} \quad (t_{0,+}, t_{1,+})\cup (t_{1,-}, t_{0,-}).
\]
We sketch the intervals $(t_{0,+}, t_{1,+})$ and $(t_{1,-}, t_{0,-})$ in 
Figure \ref{fig:test_funct} below. 

Now, let us define the test functions
\[
w_+(t) := 
\begin{cases}
v(t) - v_{\circ}(t), & \text{if } t \in (t_{0,+}, t_{1,+}), \\
0, & \text{if } t \in (-\infty,t_{0,+})\cup(t_{1,+},\infty)
\end{cases}
\]
and
\[
w_-(t) := 
\begin{cases}
v(t) - v_{\circ}(t), & \text{if } t \in (t_{1,-}, t_{0,-}), \\
0, & \text{if } t \in (-\infty,t_{1,-})\cup(t_{0,-},\infty).
\end{cases}
\]
Notice that both \( w_\pm \leq 0 \) on ${\rm supp}(w_\pm)$, and are compactly supported, piecewise smooth functions in the energy space. 

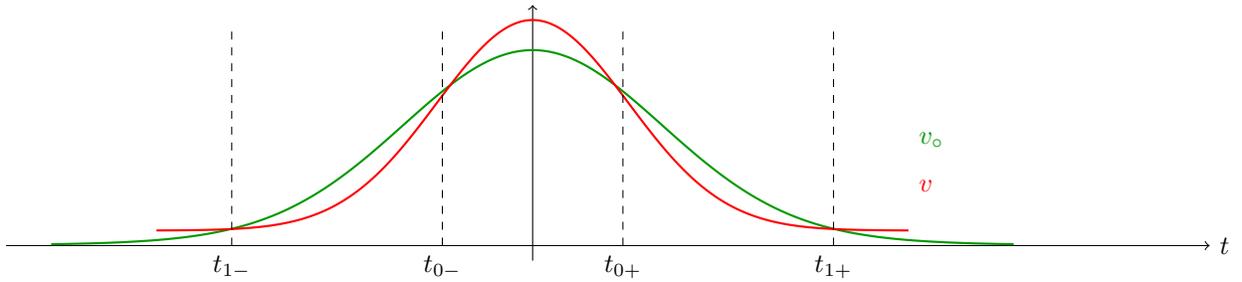
\begin{figure}[h]
\begin{center}
\begin{tikzpicture}[scale=2, every node/.style={font=\small}] 
  \draw[->] (-3.5,0) -- (4.5,0) node[right] {$t$};
  \draw[->] (0,-0.1) -- (0,1.6);

  \draw[green!60!black, thick, smooth, domain=-3.2:3.2, samples=200]
    plot(\x, {1.2*exp(-(\x/1.2)^2) + 0.1*exp(-(\x/2)^2)});

  \draw[red, thick, smooth, domain=-2.5:2.5, samples=200]
    plot(\x, {1.4*exp(-(\x/0.9)^2)+ 0.1});

  \coordinate (t1m) at (-2,0);
  \coordinate (t0m) at (-0.60,0);
  \coordinate (t0p) at (0.60,0);
  \coordinate (t1p) at (2,0);

  \draw[dashed] (t1m) -- ++(0,1.45);
  \draw[dashed] (t0m) -- ++(0,1.45);
  \draw[dashed] (t0p) -- ++(0,1.45);
  \draw[dashed] (t1p) -- ++(0,1.45);

  \node[below] at (t1m) {$t_{1-}$};
  \node[below] at (t0m) {$t_{0-}$};
  \node[below] at (t0p) {$t_{0+}$};
  \node[below] at (t1p) {$t_{1+}$};

  \node[green!60!black, right] at (2.5,0.7) {$v_{\circ}$};
  \node[red, right]           at (2.5,0.4) {$v$};
\end{tikzpicture}
\end{center}
 \caption{This figure shows the spherical solution in green 
  and the supposed solution $v$ whose maximum 
  is greater than the spherical solution in red.} \label{fig:test_funct}
\end{figure}

We compute the quadratic form associated with the linearized operator \( \overset{\circ}{\mathcal{L}}_{s} \) at \( v_{\circ} \) on \( w_+ \) to get
\begin{align} \label{morse_ind}
\int_{-\infty}^{\infty} w_+ \overset{\circ}{\mathcal{L}}_{s}(w_+) \, dt 
&= \int_{t_{0,+}}^{t_{1,+}} (v - v_{\circ}) \left( \overset{\circ}{\mathcal{L}}_{s}(v - v_{\circ}) \right) dt \nonumber \\
&= \int_{t_{0,+}}^{t_{1,+}} (v - v_{\circ}) \left( P_s(v - v_{\circ}) +c_{n,s} (v-v_\circ) - \left( \frac{n+2s}{n-2s} \right) c_{n,s} v_{\circ}^{\frac{4s}{n-2s}} (v - v_{\circ}) \right) dt \nonumber \\
&= c_{n,s} \int_{t_{0,+}}^{t_{1,+}} (v - v_{\circ}) \left( v^{\frac{n+2s}{n-2s}} - v_{\circ}^{\frac{n+2s}{n-2s}} - \left( \frac{n+2s}{n-2s} \right) v_{\circ}^{\frac{4s}{n-2s}} (v - v_{\circ}) \right) dt.
\end{align}
Observe that $\eta_2(\xi)= \xi^{\frac{n+4}{n-4}}$ is a strictly convex function, and that the integrand 
has the form 
$$(v-v_{\circ})(\eta_2(v) - \eta_2(v_{\circ}) - \eta_2'(v_{\circ})(v-v_{\circ})).$$
Since $\eta_2$ is convex, the second term 
$$\eta_2(v) - \eta_2(v_{\circ}) - \eta_2'(v_{\circ}) (v-v_{\circ}) > 0$$
is strictly positive. By our construction $v-v_{\circ} < 0$ on the interval $(t_{0,+}, t_{1,})$, 
and so the total integral is negative {\it i.e.}
\[
\int_{-\infty}^\infty w_+ \overset{\circ}{\mathcal{L}}_{s}(w_+) \, dt < 0 \quad {\rm and} \quad
\int_{-\infty}^\infty w_- \overset{\circ}{\mathcal{L}}_{s}(w_-) \, dt < 0.
\]

Since \( \operatorname{supp}(w_+) \cap \operatorname{supp}(w_-) = \varnothing \), these test functions 
are orthogonal in \( L^2 \), and \( \operatorname{span}\{w_+, w_-\} \) is a two-dimensional subspace 
on which the linearized operator around spherical solutions is negative definite. Therefore, the 
quadratic form has at least two negative directions, contradicting the fact that \( v_{\circ} \) 
has Morse index one.
This contradiction proves that no such function \( v \) with \( \sup_{t\in\mathbf{R}} v(t) > 
\hat{c}_{n,s} \) can exist. 
Hence, one has
\[
\sup_{t \in \R} v(t) \leq \sup_{t \in \R} v_{\circ}(t) = \hat{c}_{n,s}.
\]
and the proof of the proposition is finished.
\end{proof}

\section{The proof of classification}
We now prove Theorem~\ref{main_thm}. The argument has two parts. First, a local compactness step shows that the normalized difference between a solution and the corresponding Delaunay profile vanishes on compact intervals. Second, a global step enhances the local convergence to the entire line by combining the Green representation, the decay of the kernel, the weighted norm, and an ODE barrier that excludes nontrivial limit profiles in the noncompact recentering regime.

\subsection{Local classification}\label{sec:loc_uniq}
The local step relies on a compactness argument for the normalized difference between two solutions. After normalizing and rescaling on $[-L,L]$, we extract a limit profile that solves a homogeneous limiting equation with zero Cauchy data; hence, it must be trivial.

\begin{proposition} \label{prop:loc_conv}
Let $n \in \N$ and $s\in(0,1)$ with $n \geq 3$.
For each fixed $L>0$, there exists $\delta>0$ such 
that if $1-\delta < s < 1$ and $v:\R \rightarrow (0,\infty)$ satisfies 
\eqref{eq_log-cyl}, then the restriction of $v$ to $[-L,L]$ is a 
Delaunay solution. 
\end{proposition}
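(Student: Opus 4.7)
The plan is to argue by contradiction. Suppose the statement fails: then there exist $L>0$, a sequence $s_k\nearrow 1$, and solutions $v_k$ of $(\widehat{\mathcal{Q}}_{n,s_k})$ whose restrictions to $[-L,L]$ coincide with no translated Delaunay profile $\bar v_{\varepsilon}(\,\cdot\,-t_0)$. By Proposition~\ref{prop:sharp_sup_bound}, $v_k\le \hat c_{n,s_k}$ uniformly on $\mathbf{R}$, and the Harnack inequality of \cite{TanXiong2011} combined with standard local regularity for $P_{s_k}$ yields uniform $\mathcal{C}^{1,\alpha}$ bounds as well as a uniform positive lower bound on any bounded open interval containing $[-L,L]$.

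Next, I would attach to each $v_k$ a Delaunay profile $\bar v_k=\bar v_{\varepsilon_k}(\,\cdot\,-t_k)$ by matching $(v_k,v_k')$ to $(\bar v_k,\bar v_k')$ at some reference point $\tau_k\in[-L,L]$. This two-condition matching is compatible with the two-parameter Delaunay family (necksize and translation), and an implicit function argument, using nondegeneracy of the corresponding Jacobi fields at $s=1$, shows that such $(\varepsilon_k,t_k)$ exists for $s$ sufficiently close to $1$. Set $\mu_k=\sup_{[-L,L]}|v_k-\bar v_k|>0$ (positive by the failure of the conclusion) and $w_k=\mu_k^{-1}(v_k-\bar v_k)$, so that $\|w_k\|_{L^\infty([-L,L])}=1$ and $w_k(\tau_k)=w_k'(\tau_k)=0$. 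Writing the difference of the nonlinear terms via a mean value identity, $w_k$ satisfies the linearized equation
\[
P_{s_k}w_k + c_{n,s_k}w_k = \tfrac{n+2s_k}{n-2s_k}\,c_{n,s_k}\,\xi_k^{\frac{4s_k}{n-2s_k}}w_k
\]
on $[-L,L]$, with $\xi_k$ between $v_k$ and $\bar v_k$.

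The third step is to pass to the limit. Local regularity for the nonlocal linear equation and the uniform bounds extract a subsequence $w_k\to w_\infty$ in $\mathcal{C}^1([-L,L])$, with $\|w_\infty\|_{L^\infty([-L,L])}=1$, $w_\infty(\tau_\infty)=w_\infty'(\tau_\infty)=0$, while $\bar v_k\to \bar v_\infty$, a classical Fowler Delaunay profile. As $s_k\nearrow 1$, the operator $P_{s_k}$ converges (after its natural normalization, directly from the kernel formula for $K_{s_k}$) to the classical cylindrical operator $-\partial_t^2+\tfrac{(n-2)^2}{4}$, so $w_\infty$ is a bounded solution of the second-order linear homogeneous ODE obtained by linearizing the classical Fowler equation at $\bar v_\infty$. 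Standard Cauchy uniqueness then forces $w_\infty\equiv 0$, contradicting $\|w_\infty\|_{L^\infty([-L,L])}=1$.

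The principal obstacle is the operator-convergence step: one has to show $P_{s_k}w_k\to -w_\infty''+\tfrac{(n-2)^2}{4}w_\infty$ in a sense strong enough to pass to the limit, using only local bounds on $w_k$ on $[-L,L]$. This requires controlling the nonlocal tails of the singular integral outside $[-L,L]$ via the global sharp supremum bound on $v_k$ from Proposition~\ref{prop:sharp_sup_bound}, together with a careful pointwise analysis of how $K_{s_k}$ degenerates to a Dirac distribution as $s_k\to 1$. A secondary subtlety is the matching construction; to guarantee the double vanishing of $w_k$ at $\tau_k$, and hence the ODE-uniqueness collapse of $w_\infty$, one must verify that the value-plus-derivative map on the Delaunay moduli space is a local diffeomorphism uniformly in $s$ close to $1$.
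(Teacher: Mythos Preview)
Your contradiction architecture --- attach a Delaunay profile, normalize the difference, pass to the $s=1$ second-order linear ODE with zero Cauchy data, and conclude by uniqueness --- is exactly the paper's. The one substantive difference is how the Delaunay is attached. You match $(v_k,v_k')$ to $(\bar v_k,\bar v_k')$ at a point via an implicit function argument on the two-parameter Delaunay family, which you rightly flag as a subtlety requiring a uniform-in-$s$ local diffeomorphism statement. The paper bypasses this entirely: after translating so that the maximum of $v_k$ on $[-L,L]$ sits at $t=0$, it chooses $\bar v_k$ to be the Delaunay with the \emph{same maximum value}, centered at $0$. The sharp bound of Proposition~\ref{prop:sharp_sup_bound} guarantees this value lies in the range realized by Delaunay maxima, and since both functions attain an interior maximum at $0$, the conditions $\phi_k(0)=\phi_k'(0)=0$ hold automatically. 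This is cleaner and removes your secondary subtlety altogether. On the other hand, your mean-value linearization (potential $\xi_k^{4s_k/(n-2s_k)}$) avoids the quadratic remainder $\mathcal{R}_k$ the paper carries, so you do not need to argue separately that $\mu_k\to 0$.

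The principal obstacle you identify --- passing $P_{s_k}w_k$ to $-w_\infty''$ with only local control of $w_k$ on $[-L,L]$ --- is genuine, and the paper's proof of this proposition is terse on exactly this point. The actual mechanism (splitting the kernel into near, intermediate, and far pieces and using $\varsigma_{n,s_k}\sim 1-s_k$ to kill the nonlocal contributions) is spelled out in the paper's subsequent global step. One correction: your plan to control the tails ``via the global sharp supremum bound on $v_k$'' is not sufficient by itself, since after dividing by $\mu_k$ the global bound on $w_k$ is only $2\hat c_{n,s_k}/\mu_k$, which may blow up. The paper handles this in the global step through a weighted norm on $\phi_k$ and the exponential decay of $K_s$ at infinity, not through the sup bound alone.
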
 

\begin{proof} We prove this proposition by contradiction. 
Assume there exist a sequence $\{s_k\}_{k\in\mathbf{N}}\subset (0,1)$ of fractional 
powers such that $s_k\nearrow 1$ as $k\to\infty$ and a sequence of positive 
smooth solutions $\{v_k\}_{k\in\mathbf{N}}\subset \mathcal{C}^2(\mathbf{R})$ 
to \eqref{eq_log-cyl}, that is
\begin{equation}\label{eq:main_eq}
P_{s_k}(v_k) + c_{n, s_k} v_k =c_{n,s_k}\,v_k^{\frac{n+2s_k}{n-2s_k}} \quad {\rm in} \quad \mathbf{R} \quad {\rm for} \quad k\in\mathbf{N}.
\end{equation}
Furthermore, we assume $v_k$ is not a Delaunay solution for each $k$. 

Now, for each $k\in \mathbf{N}$, we choose a Delaunay solution $\overline v_k$ so that
\[
\|v_k\|_{L^\infty([-L,L])}=\|\overline v_k\|_{L^\infty([-L,L])} \quad {\rm and} \quad
v_k(0)=\|v_k\|_{L^\infty([-L,L])}=\|\overline v_k\|_{L^\infty([-L,L])}=\overline v_k(0).
\]
Let us set 
\begin{equation}\label{eq:difference function}
    \phi_k:=\overline v_k-v_k,
\end{equation}
which implies
\begin{equation}\label{eq:normalizations}
    \phi_k(0)=\phi_k'(0)=0.
\end{equation}
Also, setting
\[
\mathcal N_k(v):=P_{s_k}(v) + c_{n,s_k} v -c_{n,s_k}\,v^{\frac{n+2s_k}{n-2s_k}},
\]
it holds
\begin{eqnarray}\label{eq:linearizeddifference}
0 & = & \mathcal N_k(\overline v_k)-\mathcal N_k(v_k) \\ \nonumber 
& = & P_{s_k}(\phi_k)+ c_{n,s_k} \phi_k -c_{n,s_k}\!\left(\overline v_k^{\frac{n+2s_k}{n-2s_k}}-(\overline v_k-\phi_k)^{\frac{n+2s_k}{n-2s_k}}\right ) \\ \nonumber 
& =: & \mathcal L_k(\phi_k)-\mathcal R_k(\phi_k),  
\end{eqnarray}
where
\[
\mathcal L_k(\phi_k)=P_{s_k}(\phi_k)+ c_{n,s_k} \phi_k 
-\Big(\tfrac{n+2s_k}{n-2s_k}\Big)c_{n,s_k}\,\overline v_k^{\frac{4s_k}{n-2s_k}}\phi_k
\quad {\rm and} \quad 
\mathcal R_k(\phi_k)=\mathcal O(\|\phi_k\|^2) \quad {\rm as} \quad k\to \infty.
\]
Define a normalized sequence $\{\psi_k\}_{k\in\mathbb{N}}\subset 
\mathcal{C}^2(\mathbf{R})$ such that
\[
\psi_k(t):=\frac{\phi_k(t)}{A_k} \quad {\rm with} \quad
A_k:=\max_{t\in[-L,L]}|\phi_k(t)| \quad {\rm for} \quad k\in\mathbf{N}.
\]
Then, it holds $\|\psi_k\|_{L^\infty([-L,L])}=1$ and $\psi_k(0)=\psi_k'(0)=0$. 
Moreover, one has
\[
\mathcal L_k(\psi_k)=\frac{1}{A_k}\,\mathcal R_k(\phi_k)=\mathcal O\big(\|\phi_k\|_{L^\infty([-L,L])}\big)\to 0 \quad {\rm as} \quad k\to\infty.
\]

Recall that each $\overline v_k$ is a Delaunay-type solution to~\eqref{eq_log-cyl} corresponding 
to the same necksize normalization
\[
\overline v_k(0)=v_k(0)=\|v_k\|_{L^\infty(\R)}.
\]
Since the parameters $s_k\nearrow1$, the coefficients of the equation are uniformly bounded, 
and by Proposition~\ref{prop:sharp_sup_bound}, we can extract a subsequence such that
\(\overline v_k \longrightarrow \overline v_*\) in \(\mathcal{C}^{2,\alpha}_{\mathrm{loc}}
(\mathbf{R})\) for some $\alpha\in(0,1)$, where $\overline v_*\in\mathcal{C}^2(\mathbf{R})$ 
is the classical (local) Delaunay profile solving
\[
-\overline v_*'' + c_{n,1}\,\overline v_* 
= c_{n,1}\,\overline v_*^{\frac{n+2}{n-2}} \quad\text{in} \quad \mathbf{R}.
\]
The corresponding linearized operator around the local Delaunay profile is therefore
\begin{equation}\label{eq:Lstar}
\mathcal{L}_*(\psi)
:=
-\psi'' + c_{n,1}\,\psi
-\left ( \tfrac{n+2}{n-2}\right )\,c_{n,1}\,\overline v_*^{\frac{4}{n-2}}\psi
\quad {\rm for} \quad \psi\in \mathcal{C}^2(\mathbf{R}).
\end{equation}
This operator is precisely the local ($s=1$) limit of the family $\mathcal{L}_k$, since
$P_{s_k}\to -\partial_t^2$ in the strong resolvent sense as $s_k\to1^-$ (see 
the estimates \eqref{eq:I1bound}, \eqref{eq:I2bound} and \eqref{eq:I3bound} 
below) and $\overline v_k\to\overline v_*$ in $\mathcal{C}^2_{\mathrm{loc}}$.

Next, recall from~\eqref{eq:linearizeddifference} that
\begin{equation}\label{eq:limit_linearized_difference}
 \mathcal L_k(\psi_k)
=\frac{1}{A_k}\mathcal R_k(\phi_k)
=\mathrm{o}(1) \quad {\rm as} \quad k\to \infty,   
\end{equation}
and $\|\psi_k\|_{L^\infty([-L,L])}=1$ with $\psi_k(0)=\psi_k'(0)=0$.  
By Lemma~\ref{lm:sharpestimate} and standard regularity estimates,  
the sequence $\{\psi_k\}_{k\in\mathbf{N}}\subset \mathcal{C}^{2,\alpha}([-L,L])$ is 
equicontinuous and uniformly bounded.  

From this, we can apply the Arzelà–Ascoli theorem to find a function 
$\psi_\infty\in\mathcal{C}^{2,\alpha}([-L,L])$ such that, up to subsequence,
\(\psi_k\rightarrow \psi_\infty\)
in \(\mathcal{C}^{2,\alpha}([-L,L])\) satisfying
\begin{equation}\label{eq:limitbounded}
\begin{cases}
   &-\psi_\infty^{''} +\left(\tfrac{n-2}{2}\right)^2 \psi_\infty- \left (
   \tfrac{n(n+2)}{4}\right )v_*^{\frac{4}{n-2}} \psi_\infty=0 \quad 
   \text{in} \quad (-L,L) \\ 
   &\psi_\infty(0)=0,\quad \psi_\infty'(0) = 0.
\end{cases}
\end{equation}
Hence, passing to the limit in the equation \eqref{eq:limit_linearized_difference} gives us
\[
\mathcal{L}_*(\psi_\infty)=0\quad\text{in}\quad[-L,L].
\]
In view of~\eqref{eq:Lstar} and the initial conditions
$\psi_\infty(0)=\psi_\infty'(0)=0$, the uniqueness theorem for ODEs implies 
$\psi_\infty\equiv0$ on $[-L,L]$, which in turn contradicts the 
normalization $\| \psi_k \|_{L^\infty([-L,L])} = 1$ for each $k$.  \end{proof}

\subsection{Global classification} \label{sec:glob_uniq} 
In this section, we complete the proof of Theorem \ref{main_thm} by 
showing the convergence of $\{ \psi_k\}$ to $\psi_\infty$ is 
global, in $L^\infty(\R)$. In other words, we show that one can choose 
the same $\delta$ for each $L$ in Proposition \ref{prop:loc_conv}. 

Before proving global convergence, we record the ODE barrier that excludes fast left decay for 
global limit profiles.
\begin{lemma}\label{lem:no-fast-left-decay}
Let \(\sigma_*:=\tfrac{n-2}{2}>0\), \(V\in L^\infty((-\infty,0])\) be a bounded
nonnegative potential satisfying \(0\leq V \leq c_{2}\), and 
\(\psi\in \mathcal{C}^{2}(\mathbf{R})\) be a solution to
\begin{align}\label{eq:weightedequation}
    \begin{cases}
        -\psi''+\sigma_*^{2}\psi-V(\zeta)\psi=0 \quad \text{in} \quad \R,\\
        \psi(0)=1.
    \end{cases}
\end{align}
If for some \(\sigma>\sigma_*\) one has
\begin{equation}\label{eq:exponentialbound}
    |\psi(\zeta)|\leq e^{\sigma \zeta} \quad {\rm for \ all} \quad \zeta\leq 0, 
\end{equation}
then \(\psi\equiv 0\).
\end{lemma}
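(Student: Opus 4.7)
The plan is to argue by contradiction and exploit the strict gap $\sigma>\sigma_*$ against the characteristic decay rate of the unperturbed operator $-\partial_\zeta^{2}+\sigma_*^{2}$. As a preliminary bootstrap, from $\psi''=(\sigma_*^{2}-V)\psi$ and $|\psi(\zeta)|\leq e^{\sigma\zeta}$ one obtains $|\psi''(\zeta)|\leq(\sigma_*^{2}+c_2)e^{\sigma\zeta}$, so $\psi'$ is Cauchy at $-\infty$; any nonzero limit would force linear growth of $\psi$, incompatible with the exponential decay, whence $\psi'(\zeta)\to 0$ as $\zeta\to-\infty$ together with $|\psi'(\zeta)|\leq \tfrac{\sigma_*^{2}+c_2}{\sigma}e^{\sigma\zeta}$. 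These bounds guarantee that every boundary term at $-\infty$ in what follows vanishes.

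The heart of the proof is a concavity comparison against the natural barrier $e^{\sigma_*\zeta}$. Assume first that $\psi$ has a definite sign on a left-half line (by continuity $\psi(0)=1>0$ makes this the generic situation; WLOG $\psi\geq 0$ on $(-\infty,0]$). Setting $w(\zeta):=\psi(\zeta)-e^{\sigma_*\zeta}$ and using $(-\partial_\zeta^{2}+\sigma_*^{2})e^{\sigma_*\zeta}=0$ yields
\[
w''-\sigma_*^{2}w=-V(\zeta)\,\psi(\zeta)\leq 0,
\]
while the hypothesis $|\psi|\leq e^{\sigma\zeta}\leq e^{\sigma_*\zeta}$ on $(-\infty,0]$ forces $w\leq 0$. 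Hence $w''\leq\sigma_*^{2}w\leq 0$, so $w$ is concave on $(-\infty,0]$. Combined with $w(0)=0$ and $w(\zeta)\to 0$ as $\zeta\to-\infty$, the concavity inequality $w(\zeta)\geq(\zeta/N)\,w(N)$ passed to the limit $N\to-\infty$ forces $w(\zeta)\geq 0$ and therefore $w\equiv 0$, i.e.\ $\psi\equiv e^{\sigma_*\zeta}$ on $(-\infty,0]$. This is incompatible with $|\psi(\zeta)|\leq e^{\sigma\zeta}$ at any $\zeta<0$ since $\sigma>\sigma_*$, giving the sought contradiction. When $\psi$ oscillates arbitrarily close to $-\infty$ I would instead pass to the Volterra integral form
\[
\psi(\zeta)=-\frac{1}{\sigma_*}\int_{-\infty}^{\zeta}V(s)\,\sinh\!\bigl(\sigma_*(\zeta-s)\bigr)\,\psi(s)\,\mathrm{d}s,
\]
obtained by variation of parameters against $\{e^{\sigma_*\zeta},e^{-\sigma_*\zeta}\}$ with the vanishing at $-\infty$ of the coefficient of the fast-growing mode as the hidden boundary condition enforced by $\sigma>\sigma_*$. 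A weighted sup-norm estimate in the $e^{-\sigma\zeta}$-weight then yields a Volterra contraction, whose iteration forces $\psi\equiv 0$ on $(-\infty,0]$ and, by ODE uniqueness, on all of $\R$.

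The primary obstacle I anticipate is the sign-changing case. The concavity comparison is transparent when $\psi$ has a definite sign, but the naive Volterra contraction constant $c_2\sigma/[\sigma_*(\sigma^{2}-\sigma_*^{2})]$ blows up as $\sigma\to\sigma_*^{+}$; closing the argument uniformly in $\sigma>\sigma_*$ will likely require either a Sturm-type oscillation bound showing that fast decay together with bounded $V$ forces $\psi$ to keep a constant sign on some left-half line (thereby reducing to the concavity case), or a bootstrap that upgrades the exponential weight one iteration at a time. The bulk of the technical write-up belongs there; the preliminary bootstrap and the concavity comparison itself are routine.
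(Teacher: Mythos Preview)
Your concavity comparison for the definite-sign case is correct and rather clean: once $\psi\ge 0$ on $(-\infty,0]$, the function $w=\psi-e^{\sigma_*\zeta}$ is nonpositive, concave, and vanishes at both ends, forcing $w\equiv 0$ and hence a contradiction with $\sigma>\sigma_*$. The preliminary bootstrap for $\psi'$ is also fine.

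The gap is exactly where you locate it. The sentence ``by continuity $\psi(0)=1>0$ makes this the generic situation; WLOG $\psi\ge 0$ on $(-\infty,0]$'' is not justified: nothing in the hypotheses prevents $V$ from exceeding $\sigma_*^{2}$ on sets of positive measure, so oscillation of $\psi$ on $(-\infty,0]$ cannot be ruled out a priori. Your proposed remedy (Volterra contraction in the $e^{-\sigma\zeta}$ weight, or a Sturm-type reduction, or an iterated weight bootstrap) is left open, and as you note the naive contraction constant degenerates as $\sigma\downarrow\sigma_*$.

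The paper avoids the case split altogether with a single substitution. Setting $w(\zeta):=e^{-\sigma_*\zeta}\psi(\zeta)$ turns your $\sinh$-kernel Volterra identity into
\[
w(\zeta)=-\frac{1}{2\sigma_*}\int_{-\infty}^{\zeta}\bigl(1-e^{-2\sigma_*(\zeta-\tau)}\bigr)V(\tau)\,w(\tau)\,d\tau,
\]
whose kernel is now bounded by $1/(2\sigma_*)$. The decay hypothesis becomes $|w(\zeta)|\le e^{(\sigma-\sigma_*)\zeta}$, so $w\in L^{1}((-\infty,0])$, and one gets the scalar inequality
\[
|w(\zeta)|\le \frac{c_2}{2\sigma_*}\int_{-\infty}^{\zeta}|w(\tau)|\,d\tau.
\]
The paper then applies Gr\"onwall to $\mathcal{G}(\zeta)=\int_{-\infty}^{\zeta}|w|$ (with $\mathcal{G}(-\infty)=0$) to conclude $w\equiv 0$, hence $\psi\equiv 0$, contradicting $\psi(0)=1$. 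No sign information on $\psi$ is used. In short, the missing idea in your sign-changing branch is not a Sturm argument or a delicate bootstrap, but simply to factor out the borderline mode $e^{\sigma_*\zeta}$ \emph{before} writing the Volterra estimate; this is what converts the unbounded $\sinh$ kernel into a bounded one and lets an integral Gr\"onwall replace the contraction you were unable to close.
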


\begin{proof}
Let us define $w(\zeta):=e^{-\sigma_* \zeta}\psi(\zeta)$ for $\zeta\leq 0$. A direct computation gives us
\[
   w''+2\sigma_* w'+V(\zeta)w=0 \quad \text{on} \quad (-\infty,0], 
\]
which, by multiplying by an integrating factor, yields the exact derivative identity
\[
   \big(e^{2\sigma_* \zeta}w'(\zeta)\big)' = -e^{2\sigma_* \zeta} V(\zeta)w(\zeta).
\]

Next, from the assumed bound \eqref{eq:exponentialbound}, it follows
that 
\[
|w(\zeta)|=e^{-\sigma_*\zeta}|\psi(\zeta)| \le e^{(\sigma_*-\sigma)|\zeta|}\to 0 \quad {\rm as} 
\quad \zeta\to-\infty.
\]
Hence, $w\in L^1((-\infty,0])$ and $e^{2\sigma_* \zeta}V(\zeta)w(\zeta)\in L^1((-\infty,0])$.
In addition, by differentiating \eqref{eq:weightedequation}, we find $e^{2\sigma_* y}
w'(\zeta)\to 0$ as $\zeta\to-\infty$.
From this, by integrating from $-\infty$ to $\zeta$, we obtain
\[
   w'(\zeta) = -e^{-2\sigma_* \zeta}\int_{-\infty}^\zeta e^{2\sigma_* \tau}V(\tau)w(\tau)d\tau.
\]
Integrating by parts and using the Fundamental Theorem of Calculus  
along with the boundary condition $w(\zeta)\to 0$ as $\zeta\to-\infty$, we 
get the Volterra-type identity
\[
   w(\zeta) = -\frac{1}{2\sigma_*}\int_{-\infty}^\zeta \big(1-e^{-2\sigma_*(\zeta-\tau)}\big)V(\tau)w(\tau)d\tau.
\]

Thus, taking absolute values and recalling $0\leq V\leq c_2$, we deduce
\[
   |w(\zeta)| \leq \frac{c_2}{2\sigma_*}\int_{-\infty}^\zeta |w(\tau)|d\tau \quad {\rm for} 
   \quad \zeta\leq 0.
\]

Finally, by setting $\mathcal{G}(\zeta):=\int_{-\infty}^\zeta |w(\tau)|d\tau$, one has that
\[
\frac{d}{d\zeta}\mathcal{G}(\zeta)\leq 0 \quad {\rm and} \quad \lim_{\zeta\rightarrow-\infty}
\mathcal{G}(\zeta)=0.
\]
and satisfies the differential inequality
\[
   \frac{d}{d\zeta}\mathcal{G}(\zeta)=|w(\zeta)|\leq \tfrac{c_2}{2\sigma_*} \mathcal{G}(\zeta).
\]
By Gr\"onwall’s inequality, it follows $\mathcal{G}\equiv 0$ and hence $w\equiv 0$. 
Therefore $\psi\equiv 0$, contradicting $\psi(0)=1$.
The proof is then finished.
\end{proof}

Next, we also need a uniform interior $\mathcal{C}^{3,\alpha}$ estimate for solutions to the 
cylindrical linearized operator.
For this, we recall the kernel 
\begin{lemma}\label{lm:regularityC3}
Let $n \in \N$ and $s\in(0,1)$ with $n \geq 3$.
Let $I\subset\R$ be a bounded open interval and $V_s\in \mathcal{C}^{2,\alpha}(I)$ be a 
potential satisfying 
\[
0<c_1 \le V_s(t) \le c_2 \quad {\rm and} \quad \limsup_{s\to 1^-}\|V_s\|_{\mathcal{C}^{2,\alpha}(I)} <\infty
\]
for some $c_1,c_2>0$ independent of $s$.
Assume $\phi\in L^\infty(\R)\cap \mathcal{C}^{2}(I)$ solves
\begin{equation}\label{eq:lin-cylinder}
P_s(\phi)(t) +c_{n,s}\phi(t) -\ V_s(t)\phi(t)= \varphi_s(t) \quad {\rm in} \quad I
\end{equation}
with $\varphi_s\in \mathcal{C}^{1,\alpha}(I)$ and $\|\varphi_s\|_{\mathcal{C}^{1,\alpha}(I)}
<\infty$ uniformly in $s$.
Then, for every $J\subset\subset I$ there exist $\alpha\in(0,1)$, $s_0\in(\tfrac{1}{2},1)$, and $C>0$ 
not depending on $s\in[s_0,1)$, such that
\begin{equation}\label{eq:C3a-estimate}
\|\phi\|_{\mathcal{C}^{3,\alpha}(J)} \le C\left(\|\phi\|_{L^\infty(\R)}+
\|\varphi_s\|_{\mathcal{C}^{1,\alpha}(I)}\right).
\end{equation}
In particular, the $\mathcal{C}^{3,\alpha}$ constant is uniform as $s\to1^-$, and it does not depend on the point $t\in J$ up to third order.
\end{lemma}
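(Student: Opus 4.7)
The plan is to reach $\mathcal{C}^{3,\alpha}(J)$ in one shot by applying a fractional Schauder estimate for the translation-invariant operator $P_s$ directly to $\phi$, after upgrading the right-hand side of~\eqref{eq:lin-cylinder} from $L^\infty$ to $\mathcal{C}^{1,\alpha}$. Every constant must be kept bounded as $s\uparrow 1$; this rests on uniform structural properties of the kernel $K_s$ in~\eqref{cyl_GJMS1}, namely the short-range two-sided bound $K_s(\xi)\asymp|\xi|^{-(1+2s)}$ as $\xi\to 0$ and the exponential decay $K_s(\xi)\lesssim e^{-\gamma|\xi|}$ for $|\xi|\ge 1$, both with constants independent of $s\in[s_0,1)$. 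These follow from the integral representation of $K_s$ together with the boundedness of $\kappa_{n,s}$ near $s=1$. Independence of the final estimate on the base point $t\in J$ up to third order is automatic from the fact that $K_s$ depends only on $t-\tau$, so the Schauder inequality can be translated freely within $I$.

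I would first rewrite~\eqref{eq:lin-cylinder} as $P_s(\phi)=F_s$ with $F_s:=V_s\phi+\varphi_s-c_{n,s}\phi$. From $\phi\in\mathcal{C}^2(I)$, $\|V_s\|_{\mathcal{C}^{2,\alpha}(I)}\le C$, and $\|\varphi_s\|_{\mathcal{C}^{1,\alpha}(I)}\le C$, the source $F_s$ lies in $\mathcal{C}^{1,\alpha}(I')$ uniformly in $s$ for any $J\subset\subset I'\subset\subset I$. A one-dimensional fractional Schauder estimate for $P_s$ (in the spirit of Silvestre's interior regularity theory for integro-differential equations and its extensions by Ros-Oton--Serra and Serra to stable-like kernels), applied iteratively on a nested family of shrinking subintervals, then yields
\[
\|\phi\|_{\mathcal{C}^{2s+1+\alpha}(J)}\le C\bigl(\|\phi\|_{L^\infty(\R)}+\|\varphi_s\|_{\mathcal{C}^{1,\alpha}(I)}\bigr),
\]
with $C$ uniform in $s$ near $1$; the dependence of $F_s$ on $\phi$ through $V_s\phi$ is absorbed into $C$ by standard interpolation against $\|\phi\|_{L^\infty(\R)}$. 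Choosing $s_0>1-\alpha/2$ forces $2s+1+\alpha>3$, so $\phi\in\mathcal{C}^{3,\alpha'}(J)$ with $\alpha':=2s+\alpha-2\in(0,\alpha)$ bounded below uniformly on $[s_0,1)$, which is~\eqref{eq:C3a-estimate} after relabeling $\alpha'\mapsto\alpha$.

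The main obstacle is verifying that the Schauder constant for $P_s$ does not degenerate as $s\uparrow 1$. Off-the-shelf statements for the fractional Laplacian or for stable-like kernels often carry prefactors of order $c(n,s)$ that may blow up in this limit. I would bypass this by decomposing $K_s=K_s^{\mathrm{sing}}+K_s^{\mathrm{reg}}$, where $K_s^{\mathrm{sing}}$ agrees with the one-dimensional fractional Laplacian kernel of order $2s$ on a fixed neighborhood of $\xi=0$, up to a prefactor bounded as $s\uparrow 1$, and $K_s^{\mathrm{reg}}$ is smooth with exponential decay. The contribution of $K_s^{\mathrm{reg}}$ to $P_s(\phi)$ is a smoothing convolution controlled in every H\"older norm by $\|\phi\|_{L^\infty(\R)}$, so it remains only to track the $s$-dependence of the Schauder constant for $(-\Delta)^s$ in one variable as $s\to 1^{-}$. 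This can be handled either via the Caffarelli--Silvestre extension together with uniform elliptic Schauder estimates for the extension PDE up to its local limit at $s=1$, or by directly inspecting the constants in Silvestre's and Serra's proofs and checking that they remain bounded as $s\uparrow 1$.
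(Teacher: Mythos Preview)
Your proposal is correct and follows essentially the same route as the paper: both invoke the interior Schauder theory of Silvestre and Ros-Oton--Serra for stable-like kernels, after checking that the cylindrical kernel $K_s$ satisfies the relevant ellipticity hypotheses. The only substantive difference is in how the uniformity of the constant as $s\uparrow 1$ is secured: the paper dispatches this in one line by citing Jin--Xiong's Schauder estimates for nonlocal fully nonlinear equations \cite{MR3542618}, whereas you sketch a more hands-on verification via the decomposition $K_s=K_s^{\mathrm{sing}}+K_s^{\mathrm{reg}}$ and the Caffarelli--Silvestre extension --- a valid alternative, though more laborious.
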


\begin{proof}
This is a direct consequence of the regularity results by Silvestre \cite[Theorem~5.1]{MR2244602} and Ros-Oton and Serra \cite[Theorem~1.1]{MR3482695} under the verification that the log-cylindrical operator $\mathbb{P}_s=P_s+c_{n,s}$ on the right-hand side of \eqref{eq:lin-cylinder} satisfies the ellipticity conditions \cite[Eq. (2.2), pg. 1158]{MR2244602} and \cite[Eq. (1.2), pg. 8676]{MR3482695}.
Here, we notice that by \cite[Theorem 1.1]{MR3542618} there exists $s_0\in (\frac{1}{2},1)$ such that the regularity constant $C>0$ does not depend on $s\in (s_0,1]$.
\end{proof}

Let us introduce a weighted space suitable for studying our linearized equation.
\begin{definition}
Let $n \in \N$ with $n \geq 3$.
For any $\frac{n-2}{2}<\sigma\leq  \frac{n}{2}$, let us define the weighted norm
$$ \| \phi \|_{*}:= \sup_{t\in\R} | e^{- \sigma |t|} \phi (t)|.$$
\end{definition}

\begin{remark}
In the last definition, we can assume $\sigma=\frac{n}{2}$. 
Indeed, it is sufficient to choose any exponent that allows us to annihilate the kernel of the 
operator $P_s$, thereby enabling us to apply Lemma \ref{lem:no-fast-left-decay}.    
\end{remark}

We are now ready to prove the global compactness.

\begin{proof}[Proof of Theorem~\ref{main_thm} (global part)]
Initially, from \eqref{eq:linearizeddifference}, for each $k\in\N$ the difference function 
$ \phi_k\in \mathcal{C}^2(\R)$ given by \eqref{eq:difference function} satisfies the linearized 
equation below 
\begin{equation}\label{eq:lin}
\mathcal L_{s_k}(\phi_k)=\mathrm{o}_k(1) \quad {\rm in} \quad \R \quad \text{as} \quad s_k\to 1^- 
\quad \text{or} \quad k\to \infty,\end{equation}
where 
\[\mathcal L_{s_k}(\phi_k):= \int_{-\infty}^\infty \int_{\Ss^{n-1}}
(\phi_k(t) - \phi_k(\tau))K_{s_k}(t-\tau) d\tau + c_{n,s} v(t) 
-\Big(\tfrac{n+2s_k}{n-2s_k}\Big)c_{n,s_k}\,\overline v_k^{\frac{4s_k}{n-2s_k}}\phi.\]
Thus, by setting
\[P_{s_k} (\phi_k):=\int_{-\infty}^\infty \int_{\Ss^{n-1}}
(\phi(t) - \phi(\tau))K_{s_k}(t-\tau) d\tau\quad \text{ and } \quad V_{s_k}(t)=
\Big(\tfrac{n+2s_k}{n-2s_k}\Big)c_{n,s_k}\,\overline v_k^{\frac{4s_k}{n-2s_k}},\]
we can reformulate \eqref{eq:lin} as
\begin{equation}\label{eq:linearizedequationk}
   \begin{cases}
       P_{s_k}(\phi_k) + c_{n,s_k} \phi_k - V_{s_k}(t) \phi_k=\mathrm{o}_k(1) \quad {\rm in} \quad \R\\
       \phi_k (0)=\phi^{'}_k (0)=0
   \end{cases}
\end{equation}
with $0<c_1 < V_{s_k}(t) <c_2$ for $s_k\in(0,1)$, where we used \eqref{eq:normalizations}.

We are now in a position to prove that the limit $\phi_\infty=\lim_{k\to\infty}\phi_k$ must 
vanish identically, and so \eqref{eq:main_eq} has a unique solution, which is the so-called 
Delaunay solutions $\bar v_k\in \mathcal{C}(\mathbf{R}^2)$.

We argue by contradiction. Suppose that $\phi_k\not\equiv 0$ for each $k\in\mathbf{N}$, then by 
linearity, we may assume that $ \|\phi_k\|_{*}=1$. Thus, one can find a sequence 
$\{t_k\}_{k\in\mathbb{N}}\subset\R$ such that
$$ \| \phi_k\|_{*}=1 = e^{-\sigma |t_k|} \phi (t_k),$$
which implies 
\begin{equation}\label{eq:firstbound}
    \phi_k (t_k)= e^{\sigma |t_k|}\quad {\rm and} \quad |\phi_k (t)| \leq C_0 e^{\sigma |t|} 
    \quad {\rm for \ all} \quad t\in\R
\end{equation}
for some $C_0=C_0(n,s_k)>0$. 
This will allow us to take the limit as $s_k\to 1^-$. 

Setting $\rho=t-\tau$ and using \cite[Lemmas 2.5 and 2.6]{MR3694655}, one knows that 
there exist $\varepsilon,M\in\mathbf{R}$ with $0\ll\varepsilon\ll1$ small and $M\gg1$ 
large such that the kernel of $P_{s}$ acting on radial functions satisfies
\begin{equation*}
K_{s}(\rho)\sim
\begin{cases}\varsigma_{n,s}|\rho|^{-(1+2s)}, \quad {\rm if} \quad &0<|\rho|<\varepsilon\\
\varsigma_{n,s},\quad {\rm if} \quad &0\leq\varepsilon<|\rho|\leq M<\infty\\\varsigma_{n,s}e^{-\frac{n+2s}{s}|\rho|,}
\quad {\rm if} \quad &|\rho|>M,\end{cases}
\end{equation*}
where
$$\varsigma_{n,s}=\pi^{-\frac{n}{2}}2^{2s}\frac{\Gamma({\frac{n}{2}+s})}{\Gamma({1-s})}s\sim 1-s 
\quad \text{as} \quad s\to 1^-.$$
Therefore, as $s_k\to 1^-$, we have 
\begin{equation}
P_{s_k} (\phi_k)=\int_{-\infty}^\infty \int_{\Ss^{n-1}}
(\phi_k(t) - \phi_k(\tau))K_{s_k}(t-\tau) d\tau \sim \varsigma_{n,s}(I_1+I_2+I_3)\sim (1-s_k)(I_1+I_2+I_3),
\end{equation}
where 
\begin{equation*}\label{eq:I1def}
 I_1:=\int_{|t-\tau|<\varepsilon}(\phi_k(t) - \phi_k(\tau))|t-\tau|^{-(1+2s_k)}\,d\tau,   
\end{equation*}
\begin{equation*}\label{eq:I2def}
I_2:=\int_{\varepsilon\leq|t-\tau|\leq M}(\phi_k(t) - \phi_k(\tau))\,d\tau, 
\end{equation*}
and
\begin{equation*}\label{eq:I3def}
I_3:=\int_{|t-\tau|>M} (\phi_k(t) - \phi_k(\tau))e^{ -\frac{n+2s_k}{2}|t-\tau|}d\tau.
\end{equation*}

It remains to estimate the integral terms $I_1$, $I_2$, and $I_3$.
First, by using the regularity in Lemma~\ref{lm:regularityC3} and the parity of the kernel, we can 
Taylor expand $\phi_k(\tau)$ to get 
\begin{align}\label{eq:I1bound}
I_1&=\int_{|t-\tau|<\varepsilon}(\phi_k(t) - 
\phi_k(\tau))|t-\tau|^{-(1+2s_k)}\,d\tau\nonumber\\
&=\int_{|t-\tau|<\varepsilon}\left[-\phi'_k(t)(\tau-t) - 
\phi''_k(t)\frac{(\tau-t)^2}{2}+\mathcal{O}(|\tau-t|^3)\right]
|t-\tau|^{-(1+2s_k)}\,d\tau\nonumber\\
&=\int_{-\varepsilon}^{\varepsilon}-|\rho|^{-(1+2s)}\left[\phi_k'(t)\rho+\phi_k''(t)
\frac{\rho^2}{2}+\mathcal{O}(|\rho|^3)\right]\,d\rho \nonumber \\ 
& =  -\frac{\varepsilon^{2-2s_k}}{2-2s_k}\phi_k''(t)
+\mathcal{O}(\varepsilon^{2})=-\frac{1}{2(1-s_k)}\phi_k''(t) 
+\mathcal{O}(\varepsilon^2) \quad {\rm as} \quad s_k\to1^- 
\end{align}
Second, using the bound \eqref{eq:firstbound}, it is straightforward to see that 
\begin{equation}\label{eq:I2bound}
 I_2=\int_{\varepsilon\leq |t-\tau|\leq M}(\phi_k(t) - \phi_k(\tau))d\tau=
 \int_{\varepsilon\leq|\rho|\leq M}C_0e^{\sigma\rho}d\rho\leq C_2   
\end{equation}
for some $C_2=C_2(n,\varepsilon,M)>0$.
Third, again thanks to the bound \eqref{eq:firstbound}, one has
\begin{equation}\label{eq:I3bound}
|I_3|=\left|\int_{|t-\tau|>M} (\phi_k(t) - \phi_k(\tau))e^{ -\frac{n+2s_k}{2}|t-\tau|}
d\tau\right|\leq \int_{|\rho|>M} C_0 e^{\sigma |\rho|}e^{ -\frac{n+2s_k}{2}|\rho|}d\rho\leq C_3
\end{equation}
for some $C_3=C_3(n,M)>0$. 
Therefore, we have the convergence below 
$$P_{s_k}(\phi_k)\sim (1-s_k)\left[\frac{1}{2(1-s_k)}\phi_k''+C_2+C_3\right]\sim 
-\frac{1}{2}\phi_k''\sim -\phi_k''\quad \text{as} \quad s_k\to{1^-}.$$
Hence, when passing to the limit as $s_k\to1^-$ or $k\to\infty$ in \eqref{eq:linearizedequationk}, we get
\begin{equation}\label{eq:limitlinearizedequationk}
    \begin{cases}
        \phi_\infty^{''}+c_{n,1} \phi_\infty - V_\infty(t) \phi_\infty=0 \quad \text{in} \quad \R\\
        \phi_\infty(0)=\phi_\infty^{'}(0)=0,
    \end{cases}
\end{equation}
where $V_\infty(t) = \frac{n(n+2)}{4} (v_*(t))^{\frac{4}{n-2}}$ and 
$v_*= \lim_{k\rightarrow \infty} \overline{v}_k$ is the 
limit Delaunay solution. 

We now consider two cases, depending on whether the maximum of $t_\infty=\sup_{t\in \R}\phi_\infty(t)$ 
is attained on a bounded domain or not:

\noindent{\it Case 1:} $\limsup_{k\to\infty}|t_k|<\infty$.

\noindent In this case, we can take a limit and arrive at \eqref{eq:limitlinearizedequationk}, 
which directly gives us $\phi_\infty\equiv 0$, contradicting the normalization $\phi_\infty(0)=0$ 
and proving the desired classification for \eqref{eq:main_eq}.

\noindent{\it Case 2:} $\limsup_{k\to\infty}|t_k|=\infty$.

\noindent Without loss of generality, we may assume that $t_k >0$.  In this case, we consider the 
rescaled function
$$ \psi_k (\zeta):= \frac{ \phi_k(t_k+\zeta)}{\phi_k (t_k)},$$
which satisfies $\psi_k (0)=1$ and
\begin{equation*}
|\psi_k (\zeta) | \leq e^{\sigma |t_k\zeta|} \frac{1}{\phi (t_k)}\leq  e^{ \sigma |t_k+\zeta|} 
\frac{1}{ e^{\sigma |t_k|}}\leq e^{\sigma \zeta} \quad {\rm for} \quad \zeta\in(-t_k, \infty),
\end{equation*}
Thus, since $ \sigma = \frac{n}{2} <\frac{n+2s_k}{2}$, we can still pass to the limit as $s_k\to 1^-$. 
Therefore, the limit $\psi_\infty(\zeta)= \lim_{k\to \infty} \psi_k (\zeta)$ satisfies
\begin{equation}\label{eq:exponentialboundpsi}
|\psi_\infty (\zeta)| \leq e^{\sigma |\zeta|} \quad {\rm for} \quad \zeta\in(-t_k, \infty)
\end{equation}
and solves 
\begin{equation}\label{eq:limit2}
\begin{cases}
   &-\psi_\infty^{''} + \left(\frac{n-2}{2}\right)^2 \psi_\infty- V_\infty(\zeta) \psi_\infty=0 \quad \text{in} \quad \R \\ 
   &\psi_\infty(0)=1
\end{cases}
\end{equation}
and there exists a constant $c_2>0$ such that
$ 0\leq V_\infty(t) \leq c_2$.
Finally, using \eqref{eq:exponentialboundpsi} we can apply the 
Lemma \ref{lem:no-fast-left-decay} and the following remark to \eqref{eq:limit2} and
show that the decaying rate as $\zeta\to -\infty$ is too large. Thus, $\psi_\infty \equiv 0$, which 
is a contradiction to $ \psi_\infty(0)=1$, finishing the proof.
\end{proof}

\section{The proof of nondegeneracy} \label{sec:nondegen}

In this section we prove Theorem \ref{nondegen_thm}. Since the proof is very similar 
to that of Theorem \ref{main_thm}, we only indicate which changes are necessary. 

We let $s_k \nearrow 1$, let $\varepsilon_k \in (0,1]$ and let $v_k$ satisfy 
\eqref{eq_log-cyl} on $\R \times \Ss^{n-1}$. By Theorem~\ref{main_thm}, we know 
that $v_k$ is a Delaunay solution of order $s_k$, {\it i.e.} $v_k = v_{\varepsilon_k}$ 
for some $\varepsilon_k \in (0,1]$. Additionally, we let 
\[
\mathbb{L}_{k} := P_{s_k} + c_{n,s_k} 
- \tfrac{n+2s_k}{n-2s_k}\,c_{n,s_k}\,\bar v_{\varepsilon_k}^{\frac{4s_k}{n-2s_k}}
\]
be the linearization of \eqref{cyl_GJMS1} about $v_{\varepsilon_k}$ and let 
$w_k \in L^2(\R \times \Ss^{n-1})$ satisfy $\mathbb{L}_k(w_k) = 0$. 
Here we use two specific 
functions $\widetilde{w}_k^+$ of $\widetilde{w_k}^-$ in the kernel of the 
operator $\mathbb{L}_k$, namely 
$$\widetilde{w_k}^+ = \dot v_{\varepsilon_k} \quad {\rm and} \quad \widetilde{w_k}^-
= \left. \frac{d}{d\varepsilon} \right |_{\varepsilon_k} v_{\varepsilon} , $$
where the dot denotes the derivative with respect to $t$. 
If we let $T_\varepsilon$ be the period of the Delaunay solution with necksize 
$\varepsilon$ and differentiate the relation 
\begin{equation} \label{period_relation} 
v_\varepsilon (t+T_\varepsilon) = v_\varepsilon(t)\end{equation} 
with respect to $t$, we see that $\widetilde{w}_k^+$ is bounded and periodic. 
Differentiating \eqref{period_relation} with respect to $\varepsilon$ we 
see that $\widetilde{w}_k^-$ grows at most linearly. Furthermore, direct 
computation tells us
$$
\widetilde{w}_k^+ (0)  > 0, \quad \dot{\widetilde{w}}_k^+(0) = 0, \quad 
\widetilde{w}_k^-(0) = 0, \quad {\rm and} \quad \dot{\widetilde{w}}_k^-(0)>0.
$$

Now we use these two tempered, geometric Jacobi fields to adjust $w_k$. 
After adding appropriate multiples of $\widetilde{w}_k^+$ and $\widetilde{w}_k^-$ 
to $w_k$ we produce a new function $\widehat{w}_k$ that once again 
satisfies $\mathbb{L}_k(\widehat{w}_k) = 0$. Now fix $L>0$ and let 
$$\overline{w}_k(t) = \frac{1}{A_k} \left . \widehat{w}_k(t) \right |_{[-L,L]} \quad {\rm with} \quad A_k = \sup_{-L \leq t \leq L} \widehat{w}_k(t).
$$
This new function satisfies 
$$\mathbb{L}_k(\overline{w}_k) = 0, \quad \| 
\overline{w}_k\|_{L^\infty([-L,L])} = 1, \quad {\rm and} \quad  \overline{w}_k(0) = 0 
= \dot{\overline{w}}_k(0).$$ Using the same argument as in \S~\ref{sec:loc_uniq}
we show that $\{ \overline{w}_k\}$ converges locally to the zero function, 
and using the techniques in \S~\ref{sec:glob_uniq} we pass from local 
convergence to global convergence. Combining these two convergence 
results, we contradict the normalization that $\displaystyle 
\|\overline{w}_k\|_{L^\infty ([-L,L])} = 1$, which 
yields that $\widehat{w}_k \equiv 0$ and that $w_k$ is a linear combination of 
$\widetilde{w}_k^+$ and $\widetilde{w}_k^-$. Clearly neither of them belongs to 
$L^2(\R \times \Ss^{n-1})$. This finishes the proof.  
\hfill $\square$

\section{Extensions and applications} \label{sec:outlook} 
We conclude this paper by first discussing the possibility of extending our
classification to the case that $s>1$ and then discuss some possible applications. 

\subsection{Extending the range of the fractional parameter}\label{sec:extension}
It is quite natural to ask whether our classification result remains valid when the order of 
the conformal operator exceeds one. In this subsection we discuss our work in 
progress \cite{our_followup} to classify conformally flat, constant $Q_s$-curvature 
metrics on a twice-punctured sphere in the case that $s \in (1,1+\delta)$, where 
$\delta$ is sufficiently small. 

The first complication in this classification lies in the description of the 
Delaunay solutions of order $s$ when $s>1$. Indeed, the integral operator $P_s$
as given in \eqref{cyl_GJMS1} is not well-defined in the case $s>1$,  so one needs an 
alternative formulation of the problem. 
Jin and Xiong \cite{MR4266239} give a dual formulation of the problem, starting with the equation 
\begin{equation} \label{dual_form1}
u = c_{n,s} (-\Delta)^{-s} \left ( u^{\frac{n+2s}{n-2s}} \right ). 
\end{equation} 
After the Emden-Fowler change of variables they arrive at an integral 
operator with better convergence properties. 

The larger complication is that the passage from local convergence to 
global convergence, as described in our Section \ref{sec:glob_uniq}
above, relies on a sharp estimate of the growth rate of solutions to 
the linearized equation. In the regime $s>1$ these estimates are 
not currently available, and in particular we do not have an 
analog of Lemma~\ref{lem:no-fast-left-decay} in this setting. Deriving 
such estimates associated to the linearization of the dual operator 
will occupy a large portion of \cite{our_followup}.

\subsection{Informed speculation} \label{sec:applications}
As a direct consequence of our classification, one can rapidly prove  
several related results. First of all,  one can combine the nondegenerate with 
the slide-back technique in the paper by Korevaar, Mazzeo, Pacard and Schoen \cite{MR1666838}
to obtain refined asymptotics of solutions with isolated singularities. Second,
we expect that one can adapt the proof of compactness in the moduli space 
setting, as in the paper by Pollack \cite{MR1266101}. We only sketch the ideas of both 
of these proofs and leave the details for future projects. 

\subsubsection{Local behavior near isolated singularities} 

Using a blow-up analysis, Caffarelli, Jin, Sire and Xiong~\cite{MR3366748} show that any isolated singularity of a singular Yamabe metric on the sphere must 
be asymptotically radial. 
Combining this with our classification of the 
radial solutions, we see that any isolated singularity is asymptotic to a 
Delaunay solution. In other words, if $s< 1$ is sufficiently close to $1$ and 
if $\Lambda \subset \Ss^n$ is a finite set and $U: \Ss^n \backslash \Lambda 
\rightarrow (0,\infty)$ satisfies the boundary condition and solves the PDE 
\begin{equation} \label{const_q_sph_multiple}\tag{${\mathcal{Q}}^\circ_{n,s,\Lambda}$}
\mathbb{P}_s U = c_{n,s} U^{\frac{n+2s}{n-2s}} \quad {\rm on} \quad \mathbf{S}^n\setminus \Lambda. 
\end{equation} 
then for each $p_* \in \Lambda$ and sufficiently small radius $0<r\ll1$ there 
exists a (rescaled) Delaunay solution $\bar{U}_\varepsilon$ such that 
$$U(p) = \bar{U}_\varepsilon (p) (1+ o(1)) \quad \textrm{for all} \quad
x \in \mathbf{B}_r(p_*) \backslash \{ p_* \} \quad {\rm as} \quad p\to p_*. $$
Changing to the cylindrical gauge in the punctured ball $\mathbf{B}_r(p) 
\backslash \{ p \}$, we can write this estimate as 
\begin{equation} \label{simple_asymp}
|v(t,\theta) - v_\varepsilon(t+T)| = \mathrm{o}(1) \quad {\rm as} \quad t\to\infty  \end{equation}
for some $T \in \R$. 

In the case that $1-\delta < s < 1$ with $\delta>0$ sufficiently small, 
we can use the slide-back techniques of \cite{MR1666838} to strengthen this 
estimate,  showing there exist $T\in \R$, $a\in \R^n$, and $\beta > 1$
such that 
\begin{equation} \label{refined_asymp} 
\left | v(t,\theta) - v_\varepsilon(t+T) - e^{-t} \langle \theta, a \rangle 
\left ( -\dot v_\varepsilon (t+T) + \frac{n-2s}{2} v_\varepsilon(t+T) 
\right ) \right | = \mathcal{O} (e^{-\beta t}) \quad {\rm as} \quad t\to \infty. 
\end{equation}
Here, once again, the dot denotes a derivative with respect to $t$. 
We expect that this proof should be very similar to the one in \cite{MR1666838},
so we provide the strategy for the proof below and leave its details for subsequent work. 

We start with $v\in\mathcal{C}^\infty([0, \infty) \times \Ss^{n-1})$ a positive smooth solution to 
$$
P_s v + c_{n,s} v = c_{n,s} v^{\frac{n+2s}{n-2s}} \quad {\rm in} \quad [0, \infty) \times \Ss^{n-1}$$
and aim to show that there exists a Delaunay asymptote $v_\varepsilon$ and parameters
$T\in \R$, $a \in \R^n$, and $\beta > 1$ such that the estimate \eqref{refined_asymp} 
holds. 
Let $\{\tau_k\}_{k\in\mathbf{N}}\subset \mathbf{R}$ be any sequence of real numbers such that $\tau_k\to\infty$ as $k\to\infty$ and define the so-called slide-back sequence 
$\{v_k\}_{k\in\mathbf{N}}\subset \mathcal{C}^2([-\tau_k, \infty) \times \Ss^{n-1})$ as
\[v_k(t,\theta)
= v(t+\tau_k, \theta) \quad {\rm for} \quad k\in\mathbf{N}.
\]
By the {\it a priori} estimates proven in Section~\ref{sec:sharp_upper_bound}
, we can use the Arzela-Ascoli theorem to extract a convergent subsequence $v_k 
\rightarrow v_*$, which (by our classification result) must have the form 
$$v_*(t,\theta) = v_\varepsilon (t+T).$$ 
To obtain simple asymptotics, we must show that both $\varepsilon,T>0$ do not depend 
on the choice of the sequence. We prove the independence of 
$\varepsilon$ using the radial Pohozaev invariant (Hamiltonian) in \eqref{def:DG-Hamiltonian} below; here 
it remains for us to prove that different Delaunay solutions in fact have different 
Pohozaev invariants. Next we prove the 
independence of $T$ follows from a delicate rescaling argument similar to the proof 
of \cite[Proposition 5]{MR1666838}. 

Once we have simple asymptotics, we use the nondegeneracy sketched in the 
previous subsection together with a version of the linear decomposition lemma 
(see  \cite[Lemma 4.18]{MR1356375}) to 
get the refined asymptotics. 

\begin{remark}
The completeness condition implies the lower bound
$u(x)\gtrsim |x|^{(2s-n)/2}$, ruling out weakly singular behavior.
The asymptotic expansion~\eqref{refined_asymp} therefore provides a sharp description of all
isolated singularities for $s$ close to one.
\end{remark}

\subsubsection{Compactness of the moduli space of complete metrics}
As a final application, we describe a compactness result modeled on 
Pollack's compactness theorem \cite{MR1266101} in the scalar curvature 
setting. To state the result properly, we define both the marked 
moduli space and the unmarked moduli space of singular Yamabe metrics. 
If $\Lambda \subset \Ss^n$ is finite, we let 
\[
\mathcal{M}_{s,\Lambda} = \left \{ g=U^{\frac{4}{n-2s}}g_{\circ} \in [g_{\circ}]: 
\text{$U\in\mathcal{C}^\infty(\mathbf{S}^n\setminus \Lambda)$ is a positive singular solution to \eqref{const_q_sph_multiple}}\right \}/\sim ,
\]
Similarly, for $k \geq 3$, we define 
\[
\mathcal{M}_{s,k} = \left \{ g \in \mathcal{M}_{s,\Lambda} : 
\#\Lambda=k \right\}/\sim.
\]
Here, the quotient is taken over diffeomorphisms preserving the singular 
set $\Lambda\subset \mathbf{S}^n$ and the Gromov-Hausdorff topology is placed 
on both moduli spaces.

The compactness result we prove is the following: if $\Omega \subset \mathcal{M}_k$ 
is such that the radial Pohozaev invariants (Hamiltonian) of all Delaunay asymptotes are bounded away from 
zero and the distance between singular points remains bounded away from zero, 
then any sequence in $\Omega$ has a convergent subsequence. Again, we expect the 
proof to be similar in spirit to that of \cite{MR1266101}, \cite{MR4504918}, and 
\cite{cmpt_6th_order}, so we provide  only the central ideas of the proof. 

Since it plays such a central role, we first take some time to define the 
(radial) Pohozaev invariant as (see \cite[Theorem~1.2]{Azahara2018})\footnote{Please notice that there is a typo in the coefficients $e_1^*$ and $e^*$ as written in \cite{Azahara2018}, but the asymptotic behavior is correct.}
\begin{eqnarray} \label{def:DG-Hamiltonian} 
\mathcal{H}^*_s (v) = \frac{c_{n,s}}{\widetilde{d}_s} \left ( 
\frac{n-2s}{2n} v^{\frac{2n}{n-2s}} - \frac{1}{2} v^2 \right )  + \frac{1}{2} \int_0^{\rho_0^*} (\rho^*)^{1-2s} \left ( e_1^* (\rho) 
(\partial_t V)^2 - e^*(\rho) (\partial_{\rho^*} V)^2\right ) d\rho^*, 
\end{eqnarray} 
where $V\in \mathcal{C}^\infty(\mathbf{R}^2_+)$ is the Caffarelli-Silvestre extension of $v\in \mathcal{C}^\infty(\mathbf{R})$ and
 $e_1^*,e^* \in \mathcal{C}^\infty(0,\rho_0^*)$ are positive coefficients depending only on $\rho$ and the dimension such that $e_1^*(\rho), e_*(\rho)\to 1$ as $\rho\to 0$, where $\rho^*\in(0,\rho_0^*)$ with $\rho_0^*>0$ being the critical extension parameter given by \cite[Corollary~4.2]{Azahara2018}.
Here we recall that in Emden–Fowler coordinates, the bulk (Poincaré‐-Einstein) hyperbolic metric takes the form
\[
g_+
= d\rho^* + \bigl(1 + \tfrac{1-s}{2+2s}(\rho^*)^{n-1} \bigr) dt^2
+ \mathcal{O}(\rho^*)d t d\rho^*.
\]

We emphasize that the formula above holds only for the conformal factor expressed in log-cylindrical coordinates.
For the spherical and Euclidean gauge, we also set 
\[
\mathcal{H}^*_s (u)=(\mathcal{P}_s \circ \mathfrak{F}^{-1})(u) \quad {\rm and} \quad \mathcal{H}_s (U)=(\mathcal{H}^*_s \circ \mathfrak{F}^{-1})(u\circ \Pi),
\]
where $v\in \mathcal{C}^\infty(\mathbf{R})$, $u\in \mathcal{C}^\infty(\mathbf{R}^n\setminus \{0\})$, and $U\in \mathcal{C}^\infty(\mathbf{S}^{n}\setminus \{N,S\})$ are solutions to \eqref{eq_log-cyl}, \eqref{const_q_euc}, and \eqref{const_q_sph}, respectively.
Here we recall that $\Pi:\mathbf{S}\setminus\{N,S\}\to \mathbf{R}^n\setminus\{0\}$ is the stereographic projection and $\mathfrak{F}:\mathcal{C}^\infty(\mathbf{R}^n\setminus\{0\})\to \mathcal{C}^\infty(\mathbf{R}\times\mathbf{S}^{n-1})$ is the log-cylindrical transform.

Let $\{ g_k= U_k^{\frac{4}{n-2s}}g_\circ\}$ be a sequence of singular Yamabe metrics 
such that all the asymptotic radial Pohozaev invariants are bounded away from zero, and 
the distances between singular points are also bounded away from zero. Using (for 
instance) \cite[Lemma 11]{MR4504918}, it suffices to assume the singular set is 
a fixed finite set $\Lambda$ for the entire sequence. 
Now, let $\{ D_\ell\}_{\ell\in\mathbf{N}}$ be a 
compact exhaustion of $\Ss^n \backslash \Lambda$ and for any large $\ell\gg1$, we extract a convergent subsequence $U_k \rightarrow U_*(\ell)$. Letting 
$\ell \rightarrow \infty$, we obtain a limit $U_*\in\mathcal{C}^{2,\alpha}(\Ss^n \backslash 
\Lambda)$ for some $\alpha\in(0,1)$. Finally,  
we use the fact that the Pohozaev invariants are bounded away from zero to 
show that $U_*>0$  is strictly positive and defines a complete metric on 
$\Ss^n \backslash \Lambda$,  {\it i.e.} $\lim_{p\to \Lambda}U(p)=\infty$. If $U_*\equiv 0$, we could 
rescale the sequence and contradict the positive lower bound for all Pohozaev 
invariants. 

In the regime in which we've established our classification, namely $1-\delta 
< s < 1$, we expect the proof described above to carry over from the local setting
to the nonlocal setting without any complications. It is worth remarking that 
in the local setting one can write the radial Pohozaev invariant 
very explicitly in terms of the necksize, while such a relation in the nonlocal 
setting is more elusive, mostly because the expression \eqref{def:DG-Hamiltonian} is much 
more complicated. Thus, it would be very interesting to relate the necksize more 
explicitly to the radial Pohozaev invariant in the nonlocal setting.

\section*{Declarations}

\subsection*{Funding}
J.H.A. acknowledges financial support from S\~ao Paulo Research Foundation (FAPESP) \#2020/07566-3, \#2021/15139-0, and \#2023/15567-8 and National Council for Scientific and Technological Development (CNPq) \#409764/2023-0, \#443594/2023-6, \#441922/2023-6, and \#306014/2025-4.
A. DlT. acknowledges financial support from the Spanish Ministry of Science and Innovation (MICINN), through the IMAG-Maria de Maeztu Excellence Grant \#CEX2020-001105-M/AEI/ 10.13039/501100011033 and FEDER-MINECO Grants \#PID2021- 122122NB-I00, \#PID2020-113596GB-I00 , and \#PID2024-155314NB-I00; RED2022-134784-T, funded by MCIN/AEI/10.13039/501100011033 and by J. Andalucia (FQM-116), Fondi Ateneo – Sapienza Universit`a di Roma, PRIN (Prot. 20227HX33Z) and INdAM-GNAMPA Projects 2023, 2024 and 2025 – CUP \#E53C2200193000, CUP \#E53C23001670001 and CUP \#E532400195000. 
J. M. do \'O acknowledges financial support from National Council for Scientific and Technological Development (CNPq) \#312340/2021-4, \#409764/2023-0, \#443594/2023-6, Coordenação de Aperfeiçoamento de Pessoal de Nível Superior (CAPES) MATH AMSUD \#88887.878894/2023-00, and Para\'iba State Research Foundation (FAPESQ), \# 3034/2021.  
J.R. acknowledges financial support from the Deutsche Forschungsgemeinschaftgrant (DFG) \#561401741.
JW acknowledges financial support from Hong Kong General Research Fund \#14309824 entitled ``New frontiers in singular limits of nonlinear partial differential equations".

\subsection*{Conflict of interest}
The authors have no relevant financial or non-financial interests to disclose.

\subsection*{Data availability}
Data sharing is not applicable as no datasets were generated or analyzed during the study.

\subsection*{Ethics approval}
Not applicable.


\end{document}